\renewcommand{\b}{b}
\newcommand{\Ao}{\mathcal{A}}
\newcommand{\At}{\mathcal{B}}
\newcommand{\Zp}{{\mathbb{Z}_+}}
\newcommand{\Oo}{\Omega}
\newcommand{\Ot}{\Sigma}
\newcommand{\Po}{\mathbb{\mu}}
\newcommand{\Pt}{\mathbb{\nu}}
\newcommand{\N}{\mathbb{N}}
\newcommand{\Z}{{\mathbb Z}}
\newcommand{\R}{{\mathbb R}}
\renewcommand{\P}{\mathbb{P}}
\newcommand{\gfunc}{\mathcal{G}}
\newcommand{\gibbs}{\mathcal{G}}
\newcommand{\cyltjur}{\mathfrak{M}}
\newcommand{\net}{\mathcal{N}}
\newcommand{\tjur}{\overline{\mathfrak{M}}}
\newcommand{\var}{\mathrm{var}}
\newcounter{para}
\def\mybf #1{{\bf{\emph{#1}}}}
\numberwithin{equation}{section}
\newtheorem{theorem}{Theorem}[section]
\newtheorem{proposition}[theorem]{Proposition}
\newtheorem{lemma}[theorem]{Lemma}
\newtheorem{corollary}[theorem]{Corollary}
\newtheorem{definition}[theorem]{Definition}
\theoremstyle{remark}
\newtheorem{remark}{Remark}[section]
\title{On regularity of functions of Markov chains}
\author{Steven Berghout}
\address{S. Berghout: Mathematical Institute, University of Leiden, PO Box 9512, 2300 RA Leiden, The Netherlands}
\author{ Evgeny Verbitskiy}
\address{ E.Verbitskiy: Mathematical Institute, University of Leiden, PO Box 9512, 2300 RA Leiden, The Netherlands \newline\indent \textup{and} \newline\indent Bernoulli Institute, University of Groningen, PO Box 407, 9700 AK, Groningen, The Netherlands}
\begin{document}
\keywords{Functions of Markov measures\and Hidden Markov processes \and $g$-measures }
\subjclass{60J10 \and 37A50\and 37D35 }
\maketitle
\begin{abstract}
 We consider processes which are functions of finite-state Markov chains. It is well known that such processes are rarely Markov. However, such processes are often \emph{regular} in the following sense: 
the distant past values of the process have diminishing influence on the distribution of the present value.
In the present paper, we present  novel sufficient conditions for regularity of functions of Markov
chains. 
\end{abstract}
\section{Introduction}
Suppose $\{X_n\} $ is a stationary Markov chain taking values in a finite set $\Ao$ and assume that we are not able to observe the values $\{X_n\}$ directly. Instead, we observe the values of some function of $X_n$ that groups some elements in $\Ao$ together. To be precise, let $\pi:\Ao\to \At$, with $\At$ a smaller alphabet, and assume that we observe the process $\{Y_n\}$ given by
\begin{equation}
\label{eqt:factorprocess}
 Y_n=\pi(X_n),\quad\text{for all }n.
\end{equation}

Processes of this form have been studied extensively in the past 60 years and appear under a variety
of different names in various fields: in Probability Theory, functions of Markov chains \cite{Burke58}, grouped \cite{Harris55}, lumped \cite{Kemeny60}, amalgamated \cite{Chazottes03}, or aggregated Markov chains \cite{Stadje05}; one-block factors of Markov measures \cite{Marcus84} or sofic measures \cite{Kitchens85} in Ergodic Theory, Fuzzy Markov measures \cite{Maes95} in Statistical Mechanics. Note also that the Hidden Markov models \cite{Baum66} -- very popular in Statistics, can be cast in the form (\ref{eqt:factorprocess}) as well.

The factor process $\{Y_n\}$ is rarely Markov, the necessary and sufficient conditions have been found by Kemeny \& Snell and Dynkin \cites{Kemeny60,Dynkin63}.
This raises the principal question: what is the dependence structure of the factor process?

It turns out that, under rather mild conditions on the underlying Markov chain and the coding map $\pi$,
the resulting process can be seen as \emph{approximately} or \emph{nearly} Markov in the following sense:
the conditional distribution of the next value $Y_1$ depends on the complete past $Y_{-\infty}^0:=(\ldots, Y_{-2}, Y_{-1},Y_{0})$, but this dependence is \emph{regular}, i.e.,  the distant past values $\{Y_{-n}\}$, for $n\gg 1$,
have a diminishing effect on the distribution of $Y_1$. 
Stochastic processes with such properties occur naturally in many contexts, as a consequence many authors introduced concepts that formalize the notion of a measure that is approximately Markov. Among these concepts are the chains with complete connections \cite{Onicescu35}, chains of infinite order \cite{Harris55}, $g$-measures \cite{Keane72} and uniform martingales \cite{Kalikow90}. Although these concepts are very similar, they are not always equivalent, for a more detailed discussion see \cite{Fernandez04}. Among these notions $g$-measures are the most convenient for the purposes of this paper. Usually $g$-measures are defined on some subset of the product space $\Ao^\Zp$. This space can be thought of as the collection of all allowed paths of a process starting at time $0$. Like a Markov measure, a $g$-measure is introduced via its transition probabilities, the differences are that the transitions of a $g$-measure are described by a function $g:\Ao^ \Zp \to (0,1)$, rather than a matrix $P:\Ao \times \Ao \to [0,1]$ and that the time direction is reversed. That is, the vector $g(\cdot \  x_1^\infty)$ represents the distribution of the symbol in the origin, conditioned on the `future' configuration $x_1^\infty$. This time reversal is common in ergodic theory and is mostly inconsequential for our purposes, as a Markov measure satisfies the Markov property in both directions. A $g$-measure is approximately Markov due to the additional constraint that the function $g$ is continuous. To clarify, continuity corresponds to a vanishing influence from far away symbols since, in the product topology, a function $g:\Ao^\Zp \to \R$ is continuous if and only if:
\[
\var_n(g) \equiv \sup_{x,y \in \Ao^\Zp} \left| g(x_{0}^\infty) - g(x_0^n y_{n+1}^\infty)\right| \to 0 \mathrm{, \ as\ }n \to \infty.
\]

We can now use the language of $g$-measures to phrase the main result of this paper: we provide a novel sufficient condition for functions (factors) of Markov chains to belong to the class of $g$-measures. This condition is based on the application of the so-called fibre approach that originated in Ergodic Theory \cite{Ledrappier77}, but seems to be less known in Probability Theory.

Let us now describe this method briefly. Suppose $\{X_n\}$  is a $\Ao$-valued stationary process, and $\mu$ is its translation invariant measure on $\Ao^\Zp$.
Denote by   $\nu=\mu\circ\pi^{-1}$ the stationary law of the $\At$-valued factor process $Y_n=\pi(X_n)$.
Define the \emph{fibre} over $y \in \Ot$ as the set $\Oo_y = \{ x \in \Ao^\Zp: \pi(x_n) =y_n \text{ for all } n\ge 0\}$. 
By a well-known Disintegration Theorem, there exists a family of measures $\{ \Po_y \}$ indexed by points $y\in\At^\Zp$, called a \emph{disintergation} of $\mu$, such that $\Po_y$ is concentrated on the fibre 
$\Oo_y$ and $\Po = \int \Po_y d\Pt$, meaning that for any $\Po$-integrable function $f$ on $\Ao^{\Zp}$, one has 
$$
\int f(x) \mu(dx) =\int \Bigl[ \int f(x) \mu_y(dx)\Bigr] \nu(dy).
$$
The measures $\mu_y$ should be viewed as \emph{conditional measures} on fibres $\Oo_y$.

We will show in Theorem \ref{thm:decomp} that the factor measure $\Pt=\Po\circ\pi^{-1}$ of a Markov measure $\Po$ is consistent with a function $\tilde{g}$, i.e., $\Pt(y_0|y_1^\infty) = \tilde{g}(y)$ for $\Pt$-a.a. $y$, where $\tilde{g}$ can be expressed in terms of the disintegration $\{ \Po_y \}$. 
Based on the expression for $\tilde g$, we identify two sufficient conditions for $\Pt$ to be a $g$-measure. The first condition turns out to be equivalent to a known condition of weak lumpability, i.e., covers the case
 when the factor measure  $\Pt$ is Markov. 
The second sufficient condition is that the disintegration can be chosen in such a way that $y \to \Po_y$ is continuous. 
In this case, we say that $\Po$ admits a  Continuous Measure Disintegration (CMD) $\{ \Po_y \}$. Our main result is that existence of a CMD for $\Po$ implies that the factor measure is a a $g$-measure.

Previously, this condition has been applied successfully to the analogous question; when is a factor of a fully supported $g$-measure itself a $g$-measure \cites{Verbitskiy11,Johansson17}. In the context of factors of Markov measures, we show that the condition supersedes the currently known conditions.

These results are presented here in the following way: firstly, we introduce the necessary definitions, then we review known results in sections \ref{sec:markovtomarkov}, \ref{sec:fullypositive} and \ref{sec:fibremixing}. Subsequently we state our main  theorem in Section \ref{sec:mainresults}. In order to demonstrate that the condition in Section \ref{sec:mainresults} is more general than known results we apply the theory of non-homogeneous equilibrium states in Section \ref{sec:nonhomogeneouseqstates}. We will also recall the constructive approach to continuous measure disintegrations by Tjur in section \ref{sec:tjurformalism} to provide an interesting alternative to recover the known conditions in \ref{sec:conditionalmeasuresaremarkov}.
Finally, in Section \ref{sec:examples}, we discuss some examples to show that existence of a continuous measure disintegration is \emph{strictly} weaker than the previously known conditions and that, unfortunately, it is not a necessary condition.

\subsection{Notation}
\label{sec:notation}
Suppose $\Ao$ is a finite set (alphabet) and $M$ is  $|\Ao|\times|\Ao|$ matrix with entries in $\{0,1\}$.
The corresponding subshift of finite type (SFT) $\Oo_M$ is defined as
$$
 \Oo_M=\left\{ x=(x_n)_{n=0}^\infty \in \Ao^\Zp : M({x_n,x_{n+1}})=1 \quad \forall n \in \Zp\right\}.
$$
We equip $\Oo_M$ with the product topology. We use the shorthand notation $a_n^m=(a_n,a_{n+1},\ldots,a_m)$ for words in alphabet $\Ao$, and denote the corresponding
cylinder sets as $ [a_n^m] = \left\{ x \in \Oo_M : x_n^m = a_n^m \right\}$.
Similarly, for a given finite set $\Lambda \subset \Zp$, denote the configuration on the subset $\Lambda$ by $a_\Lambda = (a_i)_{i \in \Lambda}$. A concatenation of two configurations $a_\Lambda$ and $b_\Delta$ on disjoint sets $\Lambda,\Delta \subset \Zp$ is denoted as $a_\Lambda b_\Delta$, to be precise:
\[
(a_\Lambda b_\Delta)_i = \left\{ 
\begin{array}{ll} 
a_i & \mathrm{,if\ } i \in \Lambda, \\ 
b_i & \mathrm{,if\ } i \in \Delta.
\end{array} \right. 
\]
For a given subshift of finite type $\Oo_M$ a Markov chain with probability transition matrix $P$
is said to be \emph{compatible} with $\Oo_M$ if $P_{ij}>0 \iff M_{ij}=1$ for all $i,j\in\Ao$. In complete analogy with the terminology for Markov chains, the  subshift of finite type $\Oo_M$ is called
\begin{itemize}
\item[(a)] \mybf{irreducible} if  $\forall i,j\in\Ao$ there exists an $n=n(i,j)>0$ such that
$M^n(i,j)>0$;
\item[(b)] \mybf{aperiodic} if $\forall i\in \Ao$, one has $\gcd \left\{m>0: M^m(i,i) > 0 \right\} = 1$;
\item[(c)] \mybf{primitive} if there exists an $n>0$ such that $M^n>0$.
\end{itemize}
If the subshift of finite type $\Oo_M$ is irreducible, and $P$ is a compatible probability transition matrix,
then the (unique) stationary Markov measure $\mu$ has $\Oo_M$ as its support.

 \subsection{Single block factor maps}
Suppose $\Ao$ and $\At$ are finite sets, $|\Ao|>|\At|$, and $\pi:\Ao \to \At$ is a surjective map.
We use the same symbol $\pi$ to denote the map from $\Ao^{\Zp}$ to $\At^{\Zp}$ given by
$\pi(x)_n= \pi(x_n)$ for all $n\in\Zp$.
Let $\Po$ be a stationary Markov measure corresponding to a Markov chain $\{X_n\}$, supported on an irreducible subshift of finite type $\Oo=\Oo_M\subset \Ao^{\Zp}$, define the push-forward (or factor) measure $\Pt$ as $\Pt = \Po \circ \pi^{-1}$. The measure $\nu$ is supported on a subshift $\Ot = \pi(\Oo) \subset \At^\Zp$.
In symbolic dynamics, $\Ot$ and $\nu$ are called the sofic shift and the sofic measure, respectively.
Note that $\Ot$ is not necessarily a subshift of finite type. Throughout the paper
we make the following standing assumptions on $\Oo$ and $\pi$:
\begin{equation}\label{assumeSFTone}\tag{A1}
\text{ $\Oo=\Oo_M$  is an irreducible SFT,}
 \end{equation}
\begin{equation}\label{assumeSFTtwo}\tag{A2}
\text{the one block factor map $\pi:\Ao^{\Zp}\to\At^{\Zp}$ is such that
 $\Ot=\pi(\Oo)$ is an SFT}
 \end{equation}
 i.e.,  $\Ot=\Ot_{M'}$ for some $\{0,1\}$ matrix $M'$. We note that using standard methods of symbolic dynamics (Fisher covers), it is possible to
decide algorithmically whether for a given pair $(\Oo, \pi)$, the image $\Ot$ is indeed an SFT \cite{Lind95}.

{
 \begin{remark}\label{remhigher} The class of 1-block factor maps $\pi$ between subshifts of type, which we consider in the present paper, is in fact quite large. Indeed, by the Curtis-Lyndon-Hedlund theorem any equivariant map $\pi:\Oo\to\Ot$ is necessarily a sliding block code of some finite length, i.e.,
 there exists $k\ge 0$ such that if $y=\pi(x)$, then for any $n\ge 0$,
 $y_n$ is a function of $x_{n}^{n+k}$. By going to a higher block representation
 (c.f., \cite[Section 1.4]{ Lind95} ) $\Oo^{[k]}$ of $\Omega$, one immediately concludes that 
 the $k$-block factor map $\pi:\Oo\to\Ot$ can be equivalently represented
 as a $1$-block factor map $\pi^{[k]}:\Oo^{[k]}\to\Ot$. Finally, note also that if $\Pt$ is a Markov measure on the SFT $\Omega$, then the corresponding measure $\mu^{[k]}$ on $\Oo^{[k]}$ is again Markov, and hence, $\Pt=\mu^{[k]}\circ (\pi^{[k]})^{-1}$ is a 1-block factor 
 of the Markov measure $\mu^{[k]}$.

 \end{remark}
}
\subsection{$g$-measures}
\label{sec:gmeasures}

As we will see below, factors of Markov measures are rarely Markov.
Instead, it is far more common for factors of Markov measures to belong to the class of $g$-measures, i.e., measures having  positive continuous conditional probabilities. Suppose $\Ot\subseteq\At^{\Z_+}$ is a SFT and consider the following set of functions:
\[
\gfunc(\Ot)=\left\{ g \in C(\Ot,(0,1)): \sum_{b\in \At: by\in \Ot} g(by) = 1  \text{ for all }y \in \Ot \right\}.
\]
\begin{definition}
A translation-invariant measure $\Pt$ on $\Ot$ is called a $g$-measure for $g \in \gfunc(\Ot)$ if
\[
\Pt(y_0|y_1^\infty) = g(y_0^\infty)
\]
for $\Pt$-a.e. $x \in \Ot$. Equivalently, $\Pt$ is a $g$-measure if, for any continuous function $f:\Ot \to \R$, one has
$$
\int f(y) \Pt(dy) =\int \sum_{b \in \At: \  by\in\Ot} f(by) g(by) \Pt(dy).
$$
\end{definition}
For any $g \in \gfunc(\Ot)$, at least one $g$-measure exists; however such a measure might not be unique \cite{Bramson93}. A useful property of $g$-measures is that they are characterized by the uniform convergence of finite one-sided conditional probabilities.
\begin{proposition}\cite{Palmer77} A translation invariant probability measure $\Pt$ on a SFT $\Ot$ is a $g$-measure
if and only if the sequence of local functions on $\Ot$
$$
 g_n(y_0^n) := \nu(y_0| y_1^n), \quad
$$
converges uniformly to some function $g\in \gfunc(\Ot)$.
\end{proposition}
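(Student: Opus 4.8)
The plan is to prove the two implications separately. For the ``only if'' direction I will convert the defining identity of a $g$-measure into an explicit averaging formula for the local functions $g_n$ and bound the resulting error by the oscillation $\var_n(g)$; for the ``if'' direction I will invoke the martingale convergence theorem to identify the uniform limit of the $g_n$ with the conditional probability $\nu(y_0\mid y_1^\infty)$. Throughout I assume that $\nu$ charges every nonempty cylinder of $\Ot$ — automatic when $\Ot$ is irreducible, since $g>0$ — so that the $g_n$ are everywhere defined.

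Suppose first that $\nu$ is a $g$-measure for some $g\in\gfunc(\Ot)$. I would start by upgrading the identity $\int f\,d\nu=\int\sum_{b:\,by\in\Ot} f(by)\,g(by)\,d\nu(y)$ from continuous $f$ to indicators of cylinders by a routine bounded-convergence argument. Taking $f=\mathbbm{1}_{[y_0^n]}$ and using translation invariance to identify $\nu([y_1^n])$ with the measure of the corresponding cylinder based at the origin, one obtains
\[
 g_n(y_0^n)=\frac{\nu([y_0^n])}{\nu([y_1^n])}=\frac{1}{\nu([y_1^n])}\int_{[y_1^n]} g(y_0z)\,\nu(dz),
\]
so that $g_n(y_0^n)$ is a $\nu$-weighted average of the numbers $g(y_0z)$ over $z\in[y_1^n]$. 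Every point $y_0z$ occurring here agrees with any $y\in[y_0^n]$ on the coordinates $0,1,\dots,n$, hence $|g(y_0z)-g(y)|\le\var_n(g)$; averaging gives $\sup_{y\in\Ot}\bigl|g_n(y_0^n)-g(y)\bigr|\le\var_n(g)$, and the right-hand side tends to $0$ because $g$ is continuous. Thus $g_n\to g$ uniformly.

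Conversely, suppose $g_n\to g$ uniformly for some $g\in\gfunc(\Ot)$. Fix $a\in\At$, put $A_a=\{y\in\Ot:y_0=a\}$, and let $\mathcal{G}_n=\sigma(Y_1,\dots,Y_n)$, an increasing filtration with $\mathcal{G}_\infty=\sigma(Y_1,Y_2,\dots)$. Then $h_n:=\nu(A_a\mid\mathcal{G}_n)$ is a bounded martingale, so by L\'evy's upward theorem $h_n\to\nu(A_a\mid\mathcal{G}_\infty)$ $\nu$-a.e. On the other hand $h_n$ coincides $\nu$-a.e.\ with the function $y\mapsto g_n(a\,y_1^n)$, which by hypothesis converges uniformly to $y\mapsto g(a\,y_1^\infty)$ on the set where $a\,y_1y_2\cdots\in\Ot$ (both functions vanishing off that set). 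Comparing the two limits yields $\nu(A_a\mid\mathcal{G}_\infty)(y)=g(a\,y_1^\infty)$ for $\nu$-a.e.\ $y$; letting $a$ range over $\At$ and taking $a=y_0$, this is exactly $\nu(y_0\mid y_1^\infty)=g(y_0^\infty)$ $\nu$-a.e., so $\nu$ is a $g$-measure.

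The main obstacle I anticipate is purely bookkeeping in the first direction: extending the $g$-measure identity to cylinder indicators and keeping careful track, via stationarity, of the one-step index shift between cylinders on $\{0,\dots,n\}$ and on $\{1,\dots,n\}$. It is worth recording that $\sum_{b:\,by_1^n\in\Ot} g_n(by_1^n)=1$ identically in $n$, so the normalization built into $\gfunc(\Ot)$ passes automatically to the uniform limit; strict positivity and continuity — the remaining requirements for membership in $\gfunc(\Ot)$ — are precisely what is hypothesised, not something to be verified. Beyond this, the argument reduces to the definition of $\var_n$ and to the martingale convergence theorem.
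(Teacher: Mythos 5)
The paper does not prove this proposition --- it is quoted from Palmer--Parry--Walters with a citation only --- so there is no internal proof to compare against; judged on its own, your argument is correct and is essentially the standard one. The forward direction via the averaging identity $g_n(y_0^n)=\nu([y_1^n])^{-1}\int_{[y_1^n]}g(y_0z)\,\nu(dz)$ and the bound by $\var_n(g)$ is right, and the converse via L\'evy's upward theorem correctly identifies the uniform limit with a version of $\nu(y_0\mid y_1^\infty)$. Two small remarks. First, indicators of cylinders are already continuous on $\Ot$ (cylinders are clopen in the product topology), so no bounded-convergence upgrade of the defining identity is needed; you may apply it to $f=\mathbbm{1}_{[y_0^n]}$ directly. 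Second, your blanket justification that $\nu$ charges every nonempty cylinder ``since $g>0$'' is only available in the forward direction, where $\nu$ is assumed to be a $g$-measure; in the converse direction the positivity of $\nu$ on allowed cylinders is implicit in the hypothesis that the $g_n$ are everywhere defined (and in any case the martingale argument only needs the identification $h_n(y)=g_n(y_0y_1^n)$ to hold $\nu$-a.e.). Neither point affects the validity of the proof.
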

In the opposite direction, one can conclude that a given measure $\Pt$ is not a $g$-measure
if one is able to find a  so-called \emph{bad configuration} for $\Pt$.
\begin{definition}
A point $y\in\Ot$ is called a bad configuration for $\Pt$ if there exists an $\epsilon>0$ such that, for every $n\in\mathbb N$, one can find two points $\underline y,\overline y\in \Ot$ and $m\in\mathbb N$
such that
$$
y_0^n = \underline y_0^n =\overline y_0^n
$$
and
$$
\nu(y_0| y_1^n \overline y_{n+1}^{n+m}) - \nu(y_0| y_1^n \underline y_{n+1}^{n+m}) \ge \epsilon>0.
$$
\end{definition}
Existence of a bad configuration $y$ implies that no version of the conditional probabilities $\nu(y_0|y_1^\infty)$ (defined $\nu$-a.s.), can be continuous at $y$, and hence $\Pt$ cannot be a $g$-measure
for any continuous $g\in\gfunc(\Ot)$.

\section{Properties of factors of Markov measures}
Despite drawing significant interest in various fields, the problem of finding the necessary and sufficient conditions for factors of Markov measures to be regular is still open. On the other hand, the question under which conditions factors of Markov measures 
are Markov have been answered completely in 1960's.

\subsection{Markov factors of Markov measures}
\label{sec:markovtomarkov}
Note that the Markovianity of the factor measure might depend on the initial distribution of the underlying
Markov chain. The notion of \emph{lumpability} was developed to address this question in a uniform fashion, i.e., independently of the initial distribution.

Let $P$ be a stochastic matrix, indexed by $\Ao \times \Ao$ and $\pi:\Ao \to \At$ a factor map. Now let $\{ X_n \}$ be a Markov chain with transition matrix $P$, then $P$ is called  \emph{lumpable} for $\pi$ if the process $Y_n=\pi(X_n)$ is Markov for all choices of the initial distribution $p$. The necessary and sufficient conditions for lumpability are quite restrictive, as demonstrated by the following result:
\begin{theorem}{\cite{Kemeny60}} \label{thm:lumpable}
Suppose $P$ is an irreducible stochastic matrix, then $P$ lumpable with respect to $\pi:\Ao\to\At$ if and only if for any $y_1,y_2 \in \At$ we have
\begin{equation}
\label{eqt:lumpable}
\sum\limits_{x_2 \in \pi^{-1}(y_2)} P_{x_1 x_2} = \sum\limits_{x_2 \in \pi^{-1}(y_2)} P_{\tilde{x}_1 x_2}
\end{equation}
for any $x_1,\tilde{x}_1 \in \pi^{-1}(y_1)$. The transition matrix of the factor chain $\{Y_n=\pi(X_n)\}$
is then given by 
$$P^{(\pi)}_{y_1 y_2} = \sum\limits_{x_2 \in \pi^{-1}(y_2)} P_{x_1 x_2}.
$$
\end{theorem}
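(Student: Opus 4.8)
The plan is to treat the two implications separately: the \emph{if} direction is a short telescoping identity using nothing about $P$ beyond stochasticity, while the \emph{only if} direction is where irreducibility of $P$ is genuinely used and is the more delicate half.

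\emph{Sufficiency.} Assume \eqref{eqt:lumpable}, so that $P^{(\pi)}_{y_1 y_2}:=\sum_{x_2\in\pi^{-1}(y_2)}P_{x_1 x_2}$ is independent of the representative $x_1\in\pi^{-1}(y_1)$. Fix any initial law $p$ on $\Ao$, set $\bar p_y:=\sum_{x\in\pi^{-1}(y)}p_x$, and prove by induction on $n$ that for every word $y_0^n$,
\[
\Pr_p\bigl(Y_0=y_0,\dots,Y_n=y_n\bigr)=\bar p_{y_0}\prod_{k=1}^{n}P^{(\pi)}_{y_{k-1}y_k}.
\]
The inductive step writes the left side as $\sum p_{x_0}\prod_{i=1}^{n}P_{x_{i-1}x_i}$ over all $x_0^n$ with $\pi(x_i)=y_i$, carries out the innermost sum over $x_n\in\pi^{-1}(y_n)$ first (which by \eqref{eqt:lumpable} contributes the factor $P^{(\pi)}_{y_{n-1}y_n}$ regardless of $x_{n-1}$, hence pulls out), and then applies the induction hypothesis. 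Since the finite-dimensional distributions of $\{Y_n\}$ thus take the Markov product form with transition matrix $P^{(\pi)}$, the factor process is Markov for \emph{every} $p$, so $P$ is lumpable with the stated transition matrix.

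\emph{Necessity.} Assume $\{Y_n=\pi(X_n)\}$ is a time-homogeneous Markov chain for every initial law. Let $\lambda$ be the stationary law of $P$; since $P$ is irreducible, $\lambda$ has full support, so the factor of the stationary chain is a stationary Markov chain with a well-defined transition matrix $Q$, and $\bar\lambda_b:=\sum_{x\in\pi^{-1}(b)}\lambda_x>0$ for all $b$. Fix $i\in\Ao$, put $a=\pi(i)$, and run the chain from $\delta_i$; the factor is again Markov, with some transition matrix $Q^{(i)}$, so for all $n\ge 1$ and all symbols $b,c$,
\[
\Pr_{\delta_i}\bigl(Y_n=b,\ Y_{n+1}=c\bigr)=Q^{(i)}_{bc}\,\Pr_{\delta_i}\bigl(Y_n=b\bigr).
\]
Summing over $1\le n\le N$, dividing by $N$, letting $N\to\infty$, and using the mean ergodic theorem $\tfrac1N\sum_{n\le N}\Pr_{\delta_i}(X_n=j)\to\lambda_j$ for the finite irreducible chain $P$, the left side tends to $\Pr_\lambda(Y_0=b,Y_1=c)=\bar\lambda_b Q_{bc}$ and the right side to $Q^{(i)}_{bc}\bar\lambda_b$; since $\bar\lambda_b>0$ this forces $Q^{(i)}_{bc}=Q_{bc}$ for all $b,c$, so $Q^{(i)}$ does not depend on $i$. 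Reading off the first step of the chain from $\delta_i$ (recall $Y_0=a$ almost surely under $\delta_i$) now gives
\[
\sum_{x_2\in\pi^{-1}(c)}P_{i\,x_2}=\Pr_{\delta_i}\bigl(Y_1=c\bigr)=Q^{(i)}_{a\,c}=Q_{a\,c},
\]
a quantity depending on $i$ only through $a=\pi(i)$; this is exactly \eqref{eqt:lumpable}, and it simultaneously identifies $P^{(\pi)}_{a\,c}=Q_{a\,c}$ with the common fibre-sum.

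I expect the identification $Q^{(i)}=Q$ to be the step that needs the most care: one uses essentially that the factor chain is \emph{time-homogeneous} (so $Q^{(i)}_{bc}$ is a single number, not an $n$-dependent quantity), and the Cesàro averaging — rather than a plain limit as $n\to\infty$ — is what makes the argument work without assuming aperiodicity. Irreducibility of $P$ is used decisively in this direction and nowhere in the sufficiency direction.
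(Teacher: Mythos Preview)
The paper does not supply a proof of this theorem; it is quoted from \cite{Kemeny60} as a classical result, so there is no in-paper argument to compare against. Your argument is correct. The sufficiency half is the standard telescoping computation and needs no comment.

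For necessity, your route differs from Kemeny--Snell's original in an instructive way. In their book the \emph{definition} of lumpability already includes the clause that the transition matrix of the lumped chain be independent of the starting vector; with that convention, comparing $\Pr_{\delta_i}(Y_1=c)$ as $i$ ranges over a single fibre $\pi^{-1}(a)$ yields \eqref{eqt:lumpable} in one line, and irreducibility plays no role. The paper's phrasing (``the process $Y_n=\pi(X_n)$ is Markov for all choices of the initial distribution'') omits that clause, and you correctly do not assume it: instead you recover $Q^{(i)}=Q$ for every $i$ via Ces\`aro averaging and the mean ergodic theorem for the irreducible chain $P$. This is exactly where irreducibility earns its place in the hypothesis, and the use of Ces\`aro means rather than plain limits to accommodate periodic $P$ is the right move. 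So your proof establishes a slightly stronger statement than the classical one (under a weaker reading of ``lumpable''), at the cost of invoking the ergodic theorem where Kemeny--Snell need nothing.
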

This condition is indeed very restrictive, in part due to a relatively strong requirement that the factor process $\{Y_n\}$ must be Markov for all initial distributions. 
Instead, one could require Markovianity only for a specific given initial distribution, this is a so-called weak lumpability property. It turns out that this question can be answered algorithmically in polynomial time \cite{Gurvits05}. Even though weak lumpability is a indeed a weaker condition than lumpability, it is still rather exceptional.

\subsection{Fully supported Markov chains}
\label{sec:fullypositive}
Sufficient conditions for a factor measure to be a $g$-measure are substantially less 
restrictive than the conditions for (weak-) lumpability. We will discuss some positive and negative results, starting with the very basic positive result for Markov chains with strictly positive transition matrices $P$. This case was first considered in \cite{Harris55} and comes with an estimate of the continuity rate of the conditional probabilities ($g$-functions) of the factor measure:
\begin{theorem}[\cite{Harris55}]
\label{thm:harris}
Let $\Pt$ be a one-block factor of a Markov measure  $\Po$ with a positive transition matrix $P$,
then $\Pt$ is a $g$-measure satisfying
\[ 
\mathrm{var}_n(g) = \mathcal{O}\left( c^n \right),
\]
for some $0<c<1$.
\end{theorem}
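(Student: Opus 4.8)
The plan is to reduce the one-sided conditional probabilities $\Pt(y_0\mid y_1^n)$ to a product of non-negative matrices acting on a cone, to show that this product contracts projectively at a uniform geometric rate via Birkhoff's theorem, and to conclude with the characterisation of $g$-measures of \cite{Palmer77}. First I would set up a transfer-operator formula. Let $p$ denote the (strictly positive) stationary distribution of the chain $\{X_n\}$, and for $b\in\At$ let $D_b$ be the diagonal $|\Ao|\times|\Ao|$ matrix with $(D_b)_{xx}=1$ if $\pi(x)=b$ and $0$ otherwise, and set $Q_b:=PD_b$, so that $Q_b(x,x')=P_{xx'}$ when $\pi(x')=b$ and $0$ otherwise. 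Expanding joint probabilities of the factor process over $\pi$-preimages gives, for every word $y_0^n$ with $\Pt([y_0^n])>0$,
\[
\Pt(y_0\mid y_1^n)=\frac{\langle u_{y_0},\rho_n\rangle}{\langle p,\rho_n\rangle},\qquad \rho_n:=Q_{y_1}Q_{y_2}\cdots Q_{y_n}\mathbf 1,\qquad u_b(x):=p_x\,(D_b)_{xx},
\]
where $\mathbf 1$ is the all-ones column vector; indeed $\langle u_{y_0},\rho_n\rangle=\Pt([y_0^n])$ and $\langle p,\rho_n\rangle=\Pt([y_1^n])$. Since $P>0$ and every preimage $\pi^{-1}(b)$ is non-empty, a one-line induction shows that $\rho_n$ is a \emph{strictly positive} vector for all $n\ge 1$.

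The heart of the argument is a Birkhoff contraction estimate. Equip the open positive cone of $\R^{\Ao}$ with Hilbert's projective metric $d$. Each $Q_b=PD_b$ factors as: first restrict a positive vector to the coordinates $\pi^{-1}(b)$, an operation that is non-expansive for $d$; then apply the rectangular block $P[\Ao,\pi^{-1}(b)]$, all of whose entries are positive, so that it has finite projective diameter $\Delta_b$ and, by Birkhoff's theorem, contracts $d$ by the factor $\tanh(\Delta_b/4)<1$. Putting $c:=\max_{b\in\At}\tanh(\Delta_b/4)<1$ we obtain $d(Q_bv,Q_bv')\le c\,d(v,v')$ for all positive $v,v'$, uniformly in $b\in\At$. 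Writing $\rho_n=Q_{y_1}\cdots Q_{y_{n-1}}(Q_{y_n}\mathbf 1)$ and $\rho_{n+1}=Q_{y_1}\cdots Q_{y_{n-1}}(Q_{y_n}Q_{y_{n+1}}\mathbf 1)$, and noting that $Q_{y_n}\mathbf 1$ and $Q_{y_n}Q_{y_{n+1}}\mathbf 1$ both lie in the image of the positive block $P[\Ao,\pi^{-1}(y_n)]$ — hence are at $d$-distance at most $\Delta_{*}:=\max_{b}\Delta_b$ — the contraction gives $d(\rho_n,\rho_{n+1})\le c^{\,n-1}\Delta_{*}$. Thus $(\rho_n)_n$ is $d$-Cauchy with a geometric rate uniform over $y\in\Ot$, and after normalisation it converges to a strictly positive limiting direction $\rho_\infty(y_1^\infty)$.

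To finish, I would observe that the map $v\mapsto\langle u_{y_0},v\rangle/\langle p,v\rangle$ on the positive cone is controlled by $d$: if $d(v,v')\le\delta$ then, after rescaling $v'$, all ratios $v_x/v'_x$ lie in $[e^{-\delta},e^{\delta}]$, which bounds the discrepancy of the two quotients by $\mathcal{O}(\delta)$. Combining this with the previous paragraph yields
\[
g_n(y_0^n):=\Pt(y_0\mid y_1^n)\ \longrightarrow\ g(y_0^\infty):=\frac{\langle u_{y_0},\rho_\infty(y_1^\infty)\rangle}{\langle p,\rho_\infty(y_1^\infty)\rangle}
\]
uniformly on $\Ot$, with $\|g_n-g\|_\infty=\mathcal{O}(c^n)$. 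One checks that $g\in\gfunc(\Ot)$: positivity follows from $\rho_\infty>0$ and $p>0$, the normalisation from $\sum_{b\in\At}u_b=p$ together with the fact that $\Ot$ is an SFT (so that $g(by)$ vanishes whenever $by\notin\Ot$), and continuity because $g$ is a uniform limit of locally constant functions. Since $g_n$ depends only on $y_0^n$, for configurations $y,y'$ that agree on coordinates $0,\dots,n$ we get $|g(y)-g(y')|\le 2\|g-g_n\|_\infty=\mathcal{O}(c^n)$, so $\var_n(g)=\mathcal{O}(c^n)$. Finally, the uniform convergence $g_n\to g$ with $g\in\gfunc(\Ot)$ and the Proposition of \cite{Palmer77} show that $\Pt$ is a $g$-measure. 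The one genuinely delicate point in this plan is the Birkhoff step: the operators $Q_b$ are not themselves strictly positive (they have whole columns of zeros), so Birkhoff's theorem cannot be applied to them directly — the argument hinges on $Q_b$ factoring through the \emph{strictly} positive matrix $P$, and on the finiteness of $\At$, which together produce a \emph{single} contraction coefficient $c<1$ valid for all $b$. Everything else — the matrix formula, completeness of the cone, and the passage to $\var_n$ — is routine.
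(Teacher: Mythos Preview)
Your proposal is correct and follows essentially the same approach the paper sketches: the paper does not give a full proof of Theorem~\ref{thm:harris} but only an ``intuitive, rough, argument'' via contraction in the Hilbert projective metric for the (rectangular) positive transition matrices on fibres, which is precisely what you carry out in detail with the Birkhoff--Hilbert machinery and the \cite{Palmer77} characterisation. Your identification of the one delicate point --- that $Q_b$ is not itself strictly positive but factors through the strictly positive block $P[\Ao,\pi^{-1}(b)]$, yielding a uniform contraction coefficient over the finite alphabet $\At$ --- matches the paper's remark that ``for a rectangular matrix we can obtain the same result by using the Hilbert projective metric''.
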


Let us only mention an intuitive, rough, argument for this result; suppose $P>0$ is the transition matrix of the Markov process $\{ X_n \}$. Suppose $y \in \Ot$ then, ignoring some technicalities, we can consider the behaviour of $\Po$ on $\Oo_y = \pi^{-1}(y)$. In particular, the transition from $X_{n+1}$ to $X_n$ in $\Oo_y$ will be given by a positive rectangular matrix. It is well known that, if this matrix is square, then the corresponding map between the distributions of $X_{n+1}$ and $X_n$ is a contraction. For a rectangular matrix we can obtain the same result by using the Hilbert projective metric on the relevant distribution spaces. It is easy to show that this contraction will be uniform in $n$ and therefore the result follows. A version of this argument can also be used to prove the more general results in \cites{Chazottes03,Yoo10,Pi1,Pi2}.

\subsection{Highly non-regular factor measure}
\label{sec:furstenberg}
A factor measure $\nu$ of a Markov measure $\mu$ is not necessarily a $g$-measure. This situation can arise when any version of the conditional probabilities has an essential discontinuity in at least one point of $\Ot$. In more extreme cases the conditional probabilities can be discontinuous everywhere. One such example was  discussed by Blackwell \cite{Blackwell56}, 
Furstenberg \cite[Theorem IV.6]{Furstenberg58}, Walters \cite{Walters86} and Lorinzi et al \cite{Maes}.
Let $(X_n)_{n \in \Z_+}$ be a Bernoulli process taking values in $\{-1,1\}$ with 
$$
\Po(X_n=1) = 1-\Po(X_n=-1) = p,
$$ 
for $0<p<1$, $p \neq \frac{1}{2}$. Then the process $(\tilde{X}_n)_{n \in \Z_+}$  with $\tilde{X}_n = (X_{n},X_{n+1})$ is Markov. 
Consider the factor process $Y_n = \pi(\tilde{X}_n) = \pi(X_{n},X_{n+1}) = X_n X_{n+1}$.
Thus  the factor  process $\{Y_n\}$ can be viewed as either a two-block factor of a Bernoulli (and hence, also Markov) process $\{X_n\}$, or a 1-block factor of an extended 
Markov process $\{\tilde X_n\}$, c.f., Remark \ref{remhigher} above.

 Note that $\Ot=\pi(\Oo)$ is the full shift on two symbols $\{-1,1\}$.
In this example the conditional probabilities of the factor process $\{Y_n\}$ are discontinuous everywhere. Indeed, it is easy to see that every fibre over $y\in \Ot$, i.e. $\Oo_y = \pi^{-1}(y) \subset \Oo$, consists of two points
$$
x^+_y=(1, y_0, y_0y_1, y_0y_1y_2,\ldots)\text{ and } x^{-}_y=(-1,-y_0,-y_0y_1,-y_0y_1y_2,\ldots).
$$
We can now explicitly compute the conditional probabilities:
\[
\begin{aligned}
\Pt(y_0|y_1^n) & = \frac{\Pt(y_0^n)}{\Pt(y_1^n)} = \frac{\Po((x_y^+)_0^{n+1})+\Po((x_y^-)_0^{n+1})}{\Po((x_y^+)_1^{n+1})+\Po((x_y^-)_1^{n+1})}.
\end{aligned}
\]
Since $\mu$ is the Bernoulli measure, it is easy to see that, with $S_n=\sum_{k=0}^n y_0y_1\ldots y_k$,
one has
\[\aligned
{\Pt(y_0^n)}&=p^{1+\frac{n+1+S_n}{2}}(1-p)^{\frac{n+1-S_n}{2}}+p^{\frac{n+1-S_n}{2}}(1-p)^{1+\frac{n+1+S_n}{2}}\\
&=p^{\frac{n+1}{2}}(1-p)^{\frac{n+1}{2}}\left[p\left(\frac{p}{1-p}\right)^{\frac{S_n}{2}}+(1-p)\left(\frac{1-p}{p}\right)^{\frac{S_n}{2}}\right].
\endaligned
\]
Similarly, 
\begin{eqnarray*}
\Pt(y_1^n)&=&p^{\frac{n}{2}}(1-p)^{\frac{n}{2}}\left[p\left(\frac{p}{1-p}\right)^{\frac{\widetilde{S_n}}{2}}+(1-p)\left(\frac{1-p}{p}\right)^{\frac{\widetilde{S_n}}{2}}\right],
\end{eqnarray*}
where $\widetilde{S_n}= \sum_{k=1}^{n} y_1 y_2 \cdots y_{k}$.
Since,
 $S_n=y_0(1+\widetilde{S_n})$, using $\lambda=p/(1-p)$, one has
$$\aligned
\Pt(y_0=1|y_1^n)&=\sqrt{p(1-p)}\left(\frac{p\lambda^{\frac{\widetilde{S_n}+1}{2}}+(1-p)\lambda^{-\frac{\widetilde{S_n}+1}{2}}}{p\lambda^{\frac{\widetilde{S_n}}{2}}+(1-p)\lambda^{-\frac{\widetilde{S_n}}{2}}}\right)
=\sqrt{p(1-p)}\left(\frac{p\sqrt{\lambda}\lambda^{\widetilde{S_n}}+\frac{(1-p)}{\sqrt\lambda}}{p\lambda^{\widetilde{S_n}}+(1-p)}\right)\\
&=:\frac{a\lambda^{\widetilde{S_n}}+b}{c\lambda^{\widetilde{S_n}}+d},
\endaligned
$$where 
$$
\frac ac= \sqrt{p(1-p)\lambda}=p \neq 1-p=\sqrt{\frac {p(1-p)}\lambda}=\frac bd,
$$
since $p\neq \frac{1}{2}$. Suppose for simplicity that $\lambda>1$. 
For any $y_1^n$, one can choose a continuation $z_{n+1}^{5n}$ such that $\widetilde{S_{5n}} \gg 0$. Equally well, one can choose a continuation $w_{n+1}^{5n}$ such that $\widetilde{S_{5n}} \ll 0$. In the first case, 
\[
\Pt(y_0=1|y_1^n z_{n+1}^{5n})\simeq \frac{a}{c} = p
\]
and in the second case,
\[
\Pt(y_0=1|y_1^n w_{n+1}^{5n})\simeq \frac{b}{d} = 1-p.
\]
Therefore, the conditional probabilities $\Pt(y_0=1|y_1y_2\cdots)$ are everywhere \emph{discontinuous}. In some sense this is the worst possible and most irregular behaviour
possible. At the same time, when $p = \frac{1}{2}$, $\Pt$ is a Bernoulli(\textonehalf,\textonehalf) product measure on $\{-1,1\}^{\Zp}$.
This example therefore highlights that regularity of the factor measure depends on both the properties of the coding map and the transition probabilities.

\subsection{Fibre mixing condition}
\label{sec:fibremixing}
In previous examples we saw that it was important to consider the behaviour of the Markov process $\{X_n\}$, given a realisation of the factor process $\{Y_n\}$. In particular the structure of the fibres $\pi^{-1}(y)$,  $y \in \Ot$, plays a crucial rule (c.f., Blackwell-Furstenberg example above). Theorem \ref{thm:harris} can also be interpreted in this way. To see this, recall that positivity of $P$ implies that transitions between any letters, consistent with the fibre, are allowed. The regularity of the factor process is a consequence of the fact that each transition in this fibre, described by a positive rectangular matrix, acts as a contraction on distributions. 
The most general sufficient condition \cite{Yoo10} for factors of Markov measures to be regular has a similar flavour. In particular, in \cite{Yoo10} the above idea is generalised from positive matrices to the analogon of \emph{primitive} matrices in the context of fibres; fibre mixing.

\begin{definition}[Fibre mixing]
Let $\Oo,\Ot$ be subshifts of finite type and $\pi:\Oo \to \Ot$ is a surjective 1-block factor.
We say that $\pi$ is \mybf{fibre mixing} if, for all $y \in \Ot$, for all $x,\tilde x \in \Oo_y$ and  every $n \in \Zp$, there exists an $\hat{x} \in \Oo_y$, such that $x_{0}^{n} = \hat x_{0}^{n}$ and $\tilde x^{\infty}_{n+m} = {\hat x}^{\infty}_{n+m}$, for some $m \in \Zp$.
\end{definition}
Indeed,  fibre mixing is a sufficient condition for the factor measure to be regular.
\begin{theorem}[Yoo \cite{Yoo10}]\label{thm:Yoo}
Suppose \begin{enumerate}
\item[(i)] $\pi:\Oo \rightarrow \Ot$  is a surjective 1-block factor map between irreducible subshifts of finite type
$\Oo$ and $\Ot$,
\item[(ii)] $P$ is an irreducible stochastic matrix, compatible with the SFT $\Oo$,
and $\Po$ is the corresponding stationary Markov
measure on $\Oo$.
\end{enumerate}
Suppose the factor $\pi$ is fibre mixing. Then $\Pt=\Po\circ\pi^{-1}$ is a $g$-measure on $\Ot$, for a H\"{o}lder continuous $g$-function.
\end{theorem}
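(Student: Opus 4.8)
The plan is to run the classical \emph{fibre transfer operator} argument, in which fibre mixing takes over the role that strict positivity of $P$ plays in Theorem~\ref{thm:harris}, and to reduce the whole statement to a uniform contraction estimate for the Hilbert projective metric. First I would fix notation: for $b\in\At$ let $E_b$ be the diagonal $\Ao\times\Ao$ matrix with $(E_b)_{aa}=1$ if $\pi(a)=b$ and $0$ otherwise, put $L_b:=PE_b\ge 0$, and for a word $w=w_1\cdots w_k$ occurring in $\Ot$ set $v_w:=L_{w_1}\cdots L_{w_k}\mathbf{1}$, so that $(v_w)_a$ is the $\Po$-weight of the admissible paths starting at $a$ that project onto $w$; since $P$ is compatible with $\Oo$, $(v_w)_a>0$ exactly when $a\,w$ lifts to $\Oo$. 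Writing $p$ for the (everywhere positive) stationary vector and $p^{(b)}_a:=p(a)$ if $\pi(a)=b$ and $0$ otherwise, a short computation would give, for every $y\in\Ot$,
\[
\Pt(y_0\mid y_1^n)=\frac{\langle L_{y_1}^{\top}p^{(y_0)},\,v_{y_2^n}\rangle}{\langle p^{(y_1)},\,v_{y_2^n}\rangle},\qquad v_{y_2^n}=L_{y_2}L_{y_3}\cdots L_{y_n}\mathbf{1}.
\]
The key observation is that, for $n'>n$ with $y_0^n$ fixed, both $\Pt(y_0\mid y_1^n)$ and $\Pt(y_0\mid y_1^{n'})$ are the \emph{same} ratio of two fixed positive linear functionals, evaluated at $L_{y_2}\cdots L_{y_n}$ applied to the two nonnegative vectors $\mathbf{1}$ and $L_{y_{n+1}}\cdots L_{y_{n'}}\mathbf{1}$. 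So by Palmer's characterisation of $g$-measures recalled above \cite{Palmer77}, it is enough to show that the operator string $L_{y_2}\cdots L_{y_n}$ contracts the Hilbert projective metric, on a suitable cone, by a factor $O(\theta^{n})$ with $\theta<1$ and the implied constant uniform in $y$.

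Next I would identify the right cone. For $j\ge 1$ let $R_j(y):=\{x_j:x\in\Oo_y\}$ be the set of coordinates actually reachable along the fibre (nonempty since $y\in\Ot=\pi(\Oo)$); one checks directly that $L_{y_{j+1}}$ carries the nonnegative cone over $R_{j+1}(y)$ into the one over $R_j(y)$ with neither a zero row nor a zero column, so it is nonexpansive for the Hilbert metric, and that $\mathbf{1}$ and $L_{y_{n+1}}\cdots L_{y_{n'}}\mathbf{1}$ are strictly positive on $R_n(y)$, as are the functionals $L_{y_1}^{\top}p^{(y_0)}$ and $p^{(y_1)}$ on $R_1(y)$, each with positive bounds uniform in $y$ because only finitely many local patterns occur. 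The crux is to upgrade, via fibre mixing, a block of operators of a fixed uniform length to a strictly positive matrix. Fix $y$ and $j$, and let $a\in R_j(y)$, $a'\in R_{j+\ell}(y)$; choose lifts $x,\tilde x\in\Oo_y$ with $x_j=a$ and $\tilde x_{j+\ell}=a'$. Applying fibre mixing at position $j$ yields $\hat x\in\Oo_y$ agreeing with $x$ at position $j$ and with $\tilde x$ from some position $j+m$ onward, and since only finitely many finite configurations are involved, a compactness argument bounds $m$ by a constant $N$ uniform in $y$ and $j$. Taking $\ell=N$, the segment $\hat x_j^{j+N}$ (which ends at $\hat x_{j+N}=\tilde x_{j+N}=a'$ because $m\le N$) is an admissible path from $a$ to $a'$ over $y_j\cdots y_{j+N}$, so the $(a,a')$ entry of $L_{y_{j+1}}\cdots L_{y_{j+N}}$ is at least $p_{\min}^{N}>0$ with $p_{\min}:=\min\{P_{ik}:P_{ik}>0\}$, while all entries are $\le 1$. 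A strictly positive matrix with entries in $[p_{\min}^{N},1]$ has uniformly bounded Birkhoff projective diameter, hence Birkhoff contraction coefficient $\tau<1$, uniform in $y$ and $j$.

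To finish, I would write $n-1=qN+r$, apply the $q$ blocks (the remaining $r$ factors being nonexpansions) to the inputs $\mathbf{1}$ and $L_{y_{n+1}}\cdots L_{y_{n'}}\mathbf{1}$, whose Hilbert distance is finite and uniformly bounded, to obtain contraction by $\tau^{q}=O(\theta^{n})$ with $\theta=\tau^{1/N}$; since $v\mapsto\langle L_{y_1}^{\top}p^{(y_0)},v\rangle/\langle p^{(y_1)},v\rangle$ is Lipschitz in the Hilbert metric on the uniformly bounded region that is swept out, this gives $\lvert\Pt(y_0\mid y_1^n)-\Pt(y_0\mid y_1^{n'})\rvert=O(\theta^{n})$ uniformly in $y$ and $n'>n$. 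Hence $g_n(y_0^n):=\Pt(y_0\mid y_1^n)$ converges uniformly to some $g$ with $\var_n(g)=O(\theta^{n})$, i.e.\ H\"older; the uniform positive bounds give $g>0$, and passing to the limit in $\sum_b\Pt(b\mid y_1^n)=1$ gives the normalisation, so $g\in\gfunc(\Ot)$ and, again by Palmer's characterisation, $\Pt$ is the $g$-measure for this $g$. I expect the block-positivity step of the second paragraph to be the main obstacle: extracting the \emph{uniform} window $N$ from fibre mixing, and keeping careful track of which coordinates are reachable at each position — the vectors $v_w$ generally live on a proper, $y$-dependent subset of $\Ao$ — so that the Birkhoff contraction theory is applied on the correct sub-cones throughout; once that bookkeeping is in place, the remainder is routine.
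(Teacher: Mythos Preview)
Your argument is correct and is, in essence, the direct cone--contraction proof that underlies Yoo's original paper and the related work of Chazottes--Ugalde and Piraino cited here. The present paper takes a genuinely different route: Theorem~\ref{thm:Yoo} is quoted as a known result, and the paper's own path to the $g$-measure conclusion is the continuous measure disintegration machinery of Sections~\ref{sec:mainresults}--\ref{sec:conditionalmeasuresaremarkov}. Fibre mixing is first reformulated (Lemma~\ref{lemmayoo}, Corollary~\ref{lem:lemma_mixing_primitive}) as uniform transitivity of the non-homogeneous SFTs $\Oo_y$; this yields, either via Fan--Pollicott non-homogeneous equilibrium states or via uniqueness of Gibbs--Markov measures on fibres combined with Tjur's constructive conditioning, a \emph{continuous} family $\{\mu_y\}$, after which Corollary~\ref{pro:sufficientconditions} gives that $\nu$ is a $g$-measure. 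Your approach buys the H\"older rate immediately and explicitly, with contraction factor governed by $p_{\min}$ and the sub-positivity window $N$, using only elementary Birkhoff cone theory. The paper's approach buys a structural decomposition (existence of a CMD is isolated as the key sufficient condition, strictly weaker than fibre mixing) that exports to factors of $g$-measures and to renormalised Gibbs fields, and it shows --- via Tjur --- that mere uniqueness on each fibre already forces continuity of the disintegration; the cost is that the H\"older estimate is deferred to a remark in the conclusions. One small point: your ``compactness argument'' for the uniform window $N$ is precisely the sub-positivity characterisation of Lemma~\ref{lemmayoo}, which is stated there for \emph{mixing} SFTs; you should invoke that lemma explicitly rather than leave the uniformity informal.
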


This result provides the most general set of sufficient conditions for regularity of factors of Markov chains
known to date. Other sufficient conditions, e.g., found in \cites{Chazottes03,Kempton11} imply fibre mixing and are strictly stronger.
Let us reiterate that imposing conditions on fibres alone (i.e., the topological conditions
on $\Oo,\Ot$, and $\pi$) is not optimal: the necessary and sufficient conditions must also take $P$ into account, as demonstrated by the Blackwell-Furstenberg example discussed above. 

Yoo has also established the variant of  Theorem \ref{thm:Yoo} for factors of Gibbs measures with H\"older continuous potentials. Piraino \cite{Pi1,Pi2}  has provided an alternative proof in case of 
H\"older continuous potentials, and extended the result to classes
of potentials satisfying the Walters and Bowen conditions, showing in particular, that these
classes are preserved under 1-block factorization.
\section{Continuous measure disintegrations}
\label{sec:mainresults}
In this section we will argue that imposing conditions on the behaviour of 
conditional measures on the fibres provides a more appropriate framework
to study properties of the factor measures. As the first step, one has to properly define the conditional measures on the fibres.
Fortunately, general results of measure theory provide the necessary tools.
\begin{definition}
We call ${\bf \Po}_{\Ot} = \{ \Po_y \}_{y \in \Ot}$ a family of conditional measures for $\Po$ on the fibres $\Oo_y$ if $\Po_y$ is a Borel probability measure on the fibre $\Omega_y$,
\[
\Po_y(\Oo_y) = 1,
\]
for all $f \in L^1(\Oo,\Po)$ the map
\[
y \to \int_{\Oo_y} f(x) \Po_y(dx)
\]
is measurable and
\[
\int_\Oo f(x) \Po(dx) = \int_\Ot \int_{\Oo_y} f(x) \Po_y(dx) \Pt(dy).
\]
\end{definition}
We will also refer to a family of conditional measures ${\bf \Po}_{\Ot} = \{ \Po_y \}_{y \in \Ot}$ for $\Po$ on  fibres $\Oo_y$ as a disintegration of $\mu$ with respect to $\pi:\Oo\to\Ot$.

By a celebrated theorem of von Neumann, for all subshifts $\Oo,\Ot$, a given continuous surjection $\pi: \Oo \to\Ot$ and any Borel measure $\Po$ on $\Oo$, there exists  a disintegration ${\bf \Po}_{\Ot} = \{ \Po_y \}_{y \in \Ot}$ of $\Po$ with respect to $\pi$. Moreover, the disintegration is essentially unique in the sense that for any two disintegrations of $\Po$, $\{\Po_y\}$ and $\{\tilde\Po_y\}$, we have $\Pt(\{y:\tilde{\Po}_y(.) = \Po_y(.) \}) = 1$. We will be interested in continuous measure disintegrations (CMD): a measure disintegration
$\mathbf \Po_\Ot=\{\Po_y\}$ is called continuous if for every continuous function  $f :\Oo\to \R$, the function
\[
y \to \int_{\Oo_y} f(x) \Po_y(dx)
\]
is continuous. When a disintegration satisfies this constraint we call it a Continuous Measure Disintegration (CMD). Note that any measure $\mu$ admits at most one continuous disintegration.

As the conditional measures $\Po_y$ are not, in general, translation invariant, we introduce the following notation for cylinder sets in $\Oo_y$:
\[
{}_n[a_k^m] = \left\{ x \in \Oo_y : x_{n+k}^{n+m} = a_k^m \right\},
\]
for $a\in \Ot$ and $n,k,m \in \Zp$. 
Using  approach similar to that of \cite{Verbitskiy11}, we will now show that a measure disintegration can be used to find an expression for the conditional probabilities of a factor measure. 

\begin{theorem}
\label{thm:decomp}
Suppose \begin{enumerate}
\item[(i)] $\pi:\Oo \rightarrow \Ot$  is a surjective 1-block factor map between irreducible subshifts of finite type
$\Oo$ and $\Ot$,
\item[(ii)] $P$ is an irreducible stochastic matrix, compatible with the SFT $\Oo$, and $\Po$ is the corresponding stationary Markov
measure on $\Oo$.
\end{enumerate}
Suppose $\{\Po_y\}_{y\in \Ot}$ is a disintegration of $\mu$.
Then $\Pt=\Po\circ\pi^{-1}$ is consistent with the positive measurable normalized function $\tilde g:\Ot \to (0,1)$, i.e.,
$$
\Pt(y_0|y_1,y_2,\ldots) = \tilde g(y)\quad \Pt-a.e.,
$$
where 
\begin{equation}\label{tildeg}
\tilde{g}(y) =
\int_{\Oo_{Ty}} \left[  \sum\limits_{a\in\pi^{-1}y_0} \frac{p_a P_{a,x_0}}{p_{x_0}} \right] \Po_{Ty}(dx) 
= 
\sum_{a' \in \pi^{-1}y_1} \left[\sum\limits_{a \in \pi^{-1}y_0} \frac{p_aP_{a,a'}}{p_{a'}}\right] \Po_{Ty}({}_0[a']),
\end{equation}
and $
{}_0[ a'] = \left\{ w\in\Oo: w_{0}=a' \right\}$, $T:\Ot\to\Ot$ is the left shift on $\Ot$, for $y=(y_0,y_1,\ldots)$, $Ty=(y_1,y_2,\ldots)\in\Ot$.
\end{theorem}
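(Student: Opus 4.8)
The plan is to verify directly that the function $\tilde g$ defined in \eqref{tildeg} is a version of the conditional probability $\nu(y_0\mid y_1,y_2,\ldots)$, i.e.\ of $\nu(y_0\mid\mathcal F_1)$ with $\mathcal F_1:=\sigma(y_1,y_2,\ldots)$. Write $p$ for the stationary distribution of $P$ (which is strictly positive by irreducibility) and record, for $z\in\Ot$ and $b\in\At$, the notation
\[
\tilde g_b(z):=\int_{\Oo_z}\Bigl[\,\sum_{a\in\pi^{-1}(b)}\frac{p_a P_{a,x_0}}{p_{x_0}}\Bigr]\,\mu_z(dx),
\]
so that $\tilde g(y)=\tilde g_{y_0}(Ty)$ and $\tilde g(y)=\sum_{b\in\At}\mathbbm 1_{\{y_0=b\}}(y)\,\tilde g_b(Ty)$. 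For each fixed $b$ the bracketed integrand is a continuous (hence $L^1(\mu)$) function of $x$, so $z\mapsto\tilde g_b(z)$ is measurable by the defining property of a disintegration; this gives measurability of $\tilde g$. The second equality in \eqref{tildeg} is immediate from $\int_{\Oo_{Ty}}f(x_0)\,\mu_{Ty}(dx)=\sum_{a'\in\pi^{-1}(y_1)}f(a')\,\mu_{Ty}({}_0[a'])$, valid because $x_0\in\pi^{-1}(y_1)$ on $\Oo_{Ty}$ and $\mu_{Ty}$ is carried by $\Oo_{Ty}$. Since $\tilde g_b(Ty)$ is $\mathcal F_1$-measurable and bounded, and the cylinders $A=\{y\in\Ot:y_1^n=a_1^n\}$ (including $A=\Ot$) form a $\pi$-system generating $\mathcal F_1$, a standard monotone-class argument reduces the consistency claim to the identity
\[
\nu\bigl(\{y_0=b\}\cap A\bigr)=\int_A \tilde g_b(Ty)\,\nu(dy)\qquad\text{for every }b\in\At\text{ and every such }A.
\]

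To prove this identity I transport it to $\Oo$ via $\pi$ and combine the Markov property of $\mu$, the stationarity of $\mu$, and the disintegration. Using $\nu=\mu\circ\pi^{-1}$, the left-hand side equals $\mu\bigl(\pi^{-1}(A)\cap\{\pi(x_0)=b\}\bigr)$. A direct computation on cylinders gives the time-reversed transition law $\mathbb E_\mu[\mathbbm 1_{\{x_0=a\}}\mid\sigma(x_1,x_2,\ldots)]=p_aP_{a,x_1}/p_{x_1}$ $\mu$-a.s., whence $\mathbb E_\mu[\mathbbm 1_{\{\pi(x_0)=b\}}\mid\sigma(x_1,x_2,\ldots)]=\psi_b(x_1)$, where $\psi_b(c):=\sum_{a\in\pi^{-1}(b)}p_aP_{a,c}/p_c$. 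As $\pi^{-1}(A)$ is $\sigma(x_1,\ldots,x_n)$-measurable, the left-hand side becomes $\int_\Oo\psi_b(x_1)\,\mathbbm 1_{\pi^{-1}(A)}(x)\,\mu(dx)$. Let $\sigma$ denote the left shift on $\Oo$; since $T\circ\pi=\pi\circ\sigma$, writing $B:=\{z\in\Ot:z_0^{n-1}=a_1^n\}$ (so $A=T^{-1}B$) one has $\mathbbm 1_{\pi^{-1}(A)}(x)=\mathbbm 1_{\pi^{-1}(B)}(\sigma x)$ and $\psi_b(x_1)=\psi_b\bigl((\sigma x)_0\bigr)$, so the integrand equals $h(\sigma x)$ with $h(w):=\psi_b(w_0)\,\mathbbm 1_{\pi^{-1}(B)}(w)$; by $\sigma$-invariance of $\mu$ the integral equals $\int_\Oo\psi_b(w_0)\,\mathbbm 1_{\pi^{-1}(B)}(w)\,\mu(dw)$.

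It remains to apply the disintegration and compare with the right-hand side. Since $\mu_z$ is carried by $\Oo_z$, on which $\pi(w)=z$, the factor $\mathbbm 1_{\pi^{-1}(B)}(w)$ equals the constant $\mathbbm 1_B(z)$ inside the fibre integral, so
\[
\int_\Oo\psi_b(w_0)\,\mathbbm 1_{\pi^{-1}(B)}(w)\,\mu(dw)=\int_\Ot\mathbbm 1_B(z)\Bigl[\int_{\Oo_z}\psi_b(w_0)\,\mu_z(dw)\Bigr]\nu(dz)=\int_\Ot\mathbbm 1_B(z)\,\tilde g_b(z)\,\nu(dz),
\]
while $T$-invariance of $\nu$ gives $\int_A\tilde g_b(Ty)\,\nu(dy)=\int_\Ot\mathbbm 1_{T^{-1}B}(y)\,\tilde g_b(Ty)\,\nu(dy)=\int_\Ot\mathbbm 1_B(z)\,\tilde g_b(z)\,\nu(dz)$. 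The two agree, which proves the identity and hence that $\nu$ is consistent with $\tilde g$. Finally, summing gives $\sum_{b}\tilde g_b(z)=\int_{\Oo_z}\bigl(\sum_{a\in\Ao}p_aP_{a,x_0}\bigr)/p_{x_0}\,\mu_z(dx)=1$ for $\nu$-a.e.\ $z$, using only $pP=p$ and $\mu_z(\Oo_z)=1$; and positivity of $\tilde g$ $\nu$-a.e.\ follows from the consistency just proved, since $\nu(\{\tilde g=0\})=\sum_b\nu\bigl(\{y_0=b,\ \nu(y_0\mid\mathcal F_1)=0\}\bigr)=0$ (the integral of a conditional probability over the set where it vanishes is zero). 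I expect the only real obstacle to be bookkeeping: keeping the two shifts $\sigma$ on $\Oo$ and $T$ on $\Ot$, and the corresponding re-indexing of cylinders, consistent through the stationarity step; the probabilistic content — the reversed-time Markov property, stationarity, and the disintegration formula — is then applied in a routine way.
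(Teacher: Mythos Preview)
Your proof is correct and rests on the same three ingredients as the paper's --- the time-reversed Markov identity $\mu(x_0=a\mid x_1,x_2,\ldots)=p_aP_{a,x_1}/p_{x_1}$, shift invariance of $\mu$ (equivalently $\nu$), and the disintegration formula --- but packages them differently. The paper verifies consistency in its dual (transfer-operator) form, showing $\int h\,d\nu=\int\sum_{b:by\in\Ot}h(by)\tilde g(by)\,\nu(dy)$ for every continuous $h$ by writing $\int h\,d\nu=\int (h\circ\pi)\,d\mu$, applying the $g$-measure identity for $\mu$ with $g(x)=p_{x_0}P_{x_0x_1}/p_{x_1}$, and then disintegrating; you instead check directly that $\tilde g$ is a version of $\nu(y_0\mid\mathcal F_1)$ on a generating $\pi$-system of cylinders. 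Both arguments are equally short and entirely equivalent; yours is perhaps the more ``probabilistic'' phrasing, the paper's the more ``ergodic-theoretic'' one.

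One minor point: the theorem asserts $\tilde g:\Ot\to(0,1)$, i.e., pointwise positivity, and the paper obtains this via the explicit uniform bound $\tilde g(y)\ge\varkappa:=\min_{a,a':P_{aa'}>0}p_aP_{aa'}/p_{a'}>0$ for every $y\in\Ot$ (using surjectivity of $\pi$ to guarantee at least one positive summand, together with $\sum_{a'\in\pi^{-1}y_1}\mu_{Ty}({}_0[a'])=1$). Your indirect argument via $\nu(\{\tilde g=0\})=0$ is correct but only gives positivity $\nu$-almost everywhere; you may want to add the one-line direct lower bound to match the stated codomain. Similarly, your normalization argument in fact works for every $z$ (not just $\nu$-a.e.), since $\mu_z(\Oo_z)=1$ holds for all $z$ by the paper's definition of disintegration.
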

\begin{proof}
The expression for $\tilde g$ originates from the following `finite-dimensional' equality:
denote by $\P$ the joint distribution of $(\{X_n\},\{Y_n\})$, where $\{X_n\}$ is the stationary Markov chain with the transition probability matrix $P$, and $Y_n=\pi(X_n)$ for all $n$. Then
$$\aligned
\P(y_0|y_1^n)&=\frac {\P(y_0y_1^n)}{\P(y_1^n)}=\frac {\sum\limits_{x_0^n\in \pi^{-1}y_0^n}
\P(x_0x_1^n)}{\P(y_1^n)}
=\sum_{x_1^n\in\pi^{-1}y_1^n} \left[\sum_{x_0\in \pi^{-1}y_0}  {\P(x_0|x_1^n) } \right] 
\frac {\P(x_1^n)}{\P(y_1^n)}\\
&=\sum_{x_1^n\in\pi^{-1}y_1^n} \left[\sum_{x_0\in \pi^{-1}y_0} \frac{p_{x_0}P_{x_0,x_1}}{p_{x_1}}  \right] 
\frac {\P(x_1^n)}{\P(y_1^n)} \\
& = \sum_{x_1 \in\pi^{-1}y_1} \left[\sum_{x_0\in \pi^{-1}y_0} \frac{p_{x_0}P_{x_0,x_1}}{p_{x_1}}  \right] 
\P\bigl(X_1 = x_1 |Y_1^n = y_1^n \bigr).
\endaligned
$$
The Markov measure $\Po$, corresponding to $\{ X_n \}$, is a $g$-measure for the function $g(x)=\frac{ p_{x_{0}} P_{x_0,x_1}   } {p_{x_1}}$, where $p$ is the invariant distribution: $pP=p$. We will now show that $\tilde g$, given by \eqref{tildeg}, is positive and normalized and finally that $\Pt=\Po\circ\pi^{-1}$ is consistent with $\tilde{g}$.

It is easy to check that $\tilde g$ is normalized. Indeed,
$$\aligned
\sum_{y_0\in\At} &\tilde g(y_0,y_1,y_2,\ldots) = \sum_{y_0\in\At}\left( \sum_{a' \in \pi^{-1}y_1} \left[\sum\limits_{a \in \pi^{-1}y_0} \frac{p_aP_{a,a'}}{p_{a'}}\right] \Po_{Ty}({}_0[a'])\right)\\
&= \sum_{a' \in \pi^{-1}y_1} \left[\sum_{y_0\in\At}
\sum\limits_{a \in \pi^{-1}y_0} \frac{p_aP_{a,a'}}{p_{a'}}\right] \Po_{Ty}({}_0[a'])=\sum_{a' \in \pi^{-1}y_1} \left[\sum_{a\in\Ao} \frac{p_aP_{a,a'}}{p_{a'}}\right] \Po_{Ty}({}_0[a'])\\
&=\sum_{a' \in \pi^{-1}y_1} 1 \cdot \Po_{Ty}({}_0[a'])=1,
\endaligned
$$
where we used that since $p$ is the invariant distribution: $pP=p$, or 
$\sum_{a\in\Ao} {p_aP_{a,a'}}={p_{a'}}$ for all $a'\in\Ao$, and hence $\tilde g$ is normalized.

The measurability of $\tilde g$  follows immediately from the measurability of the measure disintegration
$\{\mu_y\}$. The positivity of $\tilde g$ is readily checked as well.
Let $y=(y_0,y_1,\ldots)\in \Ot$, then the transition
from $y_0$ to $y_1$ is allowed in $\Ot$. Since $\pi:\Oo\to \Ot$ is surjective, it means that there is at least one pair $(a,a')$ such that $\pi(a)=y_0$, $\pi(a')=y_1$ and $P_{aa'}>0$.
Since the Markov chain is assumed to be irreducible it follows that the invariant distribution $p$ is strictly positive, and hence
$$
\varkappa = \min_{a,a':\ P_{aa'}>0} \frac{p_a P_{a,a'}}{p_{a'}}>0.
$$
Therefore, 
\begin{equation}\label{tildegpos}
\tilde{g}(y) =
\sum_{a' \in \pi^{-1}y_1} \left[\sum\limits_{a \in \pi^{-1}y_0} \frac{p_aP_{a,a'}}{p_{a'}}\right] \Po_{Ty}({}_0[a'])\ge \sum_{a' \in \pi^{-1}y_1}\varkappa \ \Po_{Ty}({}_0[a'])=\varkappa>0.
\end{equation}
Now we are going to show that $\Pt=\Po\circ\pi^{-1}$ is consistent with $\tilde g$, or, equivalently, that
for any continuous $h:\Ot\to \R$, one has
$$
\int_{\Ot} h(y) \Pt(dy) =\int_{\Ot}  \sum\limits_{b \in \At:\  by \in \Ot} h(by)  \tilde{g}(by) \Pt(dy).
$$

Now we show consistency of $\Pt$ with $\tilde{g}$ by using the fact that $\Po$ is a $g$-measure for 
$$g(x)=\frac {p_{x_0}P_{x_0x_1}}{p_{x_1}}.$$
Consider an arbitrary  $h \in C(\Ot)$, and let $\{ \Po_y \}$ be a measure disintegration for $\mu$ and $\pi$, then
\[
\begin{aligned}
\int_{\Ot} h(y) \Pt(dy) =& \int_{\Oo} (h \circ \pi)(x) \Po(dx) \\
 = & \int_{\Oo} \left[ \sum\limits_{a \in \Ao: P_{a x_0}>0 }\ (h \circ \pi)\left(a x_0^\infty\right) 
 g(ax_0^\infty) \right] \Po(dx) \\
  = & \int_{\Ot} \int_{\Oo_y} \left[ \sum_{\substack{b \in \At:\\b\pi(x) \in \Ot}}\ 
  \sum_{\substack{a \in \pi^{-1}b\\ P_{a,x_0}>0}} (h \circ \pi)(ax_0^\infty) g(ax_0) \right] \Po_y(dx) \Pt(dy) \\
  =&\int_{\Ot} \int_{\Oo_y} \left[ \sum_{\substack{b \in \At:\\by \in \Ot}}\ 
  \sum_{\substack{a \in \pi^{-1}b\\ P_{a,x_0}>0}} (h \circ \pi)(ax_0^\infty) g(ax_0) \right] \Po_y(dx) \Pt(dy) \\
  = & \int_{\Ot} \left( \sum\limits_{\substack{b \in \At:\\by \in \Ot}} h(by) \int_{\Oo_y} \left[  \sum\limits_{a \in \pi^{-1}b} g(ax_0) \right] \Po_y(dx) \right) \Pt(dy) \\
    = & \int_{\Ot}  \sum\limits_{\substack{b \in \At:\\by \in \Ot}} h(by)  \tilde{g}(by) \Pt(dy).
    \end{aligned}
\]
Thus, $\Pt$ is consistent with a positive normalized function $\tilde{g}:\Ot\to(0,1)$.
\end{proof}

Therefore, if for some disintegration $\Po_\Ot=\{\Po_y\}$, the function $\tilde{g}$, as defined in equation (\ref{tildeg}), is continuous, then $\nu$ is a $g$-measure.
There are two obvious sets of sufficient conditions for continiuty of $\tilde g$.

\begin{corollary}\label{pro:sufficientconditions}
Under conditions of Theorem \ref{thm:decomp}, the measure $\Pt$ is a $g$-measure if  there exists a disintegration $\Po_\Ot=\{\Po_y\}$ such that $
\tilde g(y)$, given by (\ref{tildeg}), is a continuous function on $\Ot$.

In particular, $\tilde g$ is continuous if one of the following conditions holds:
\begin{itemize}
\item[1)] matrix $Q=(Q_{a,a'})$ with $Q_{a,a'} = \frac{p_aP_{a,a'}}{p_{a'}} $ satisfies:
\begin{equation}\label{cond1}
\sum_{a \in \pi^{-1}(b)} Q_{a a'} = \sum_{a \in \pi^{-1}(b)} Q_{a a''},
\end{equation}
for any $b \in \At$ and any $a',a'' \in \pi^{-1}(b')$, where $b'\in \At$.
\item[2)] $\Po$ admits a \emph{continuous measure disintegration} on the fibres 
$\{\Oo_y=\pi^{-1}(y):  y\in \Ot\}$;
\end{itemize}
\end{corollary}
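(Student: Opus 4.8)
The plan is to deduce the corollary directly from Theorem~\ref{thm:decomp}: since that theorem shows $\Pt$ is consistent with the positive normalized function $\tilde g$ of \eqref{tildeg}, all that remains is to exhibit, under each of the two conditions, a disintegration $\Po_\Ot=\{\Po_y\}$ for which $\tilde g$ is \emph{continuous} on $\Ot$; continuity together with consistency then forces $\tilde g\in\gfunc(\Ot)$ and hence $\Pt$ to be a $g$-measure (using, e.g., the characterization via uniform convergence of $g_n$, or simply the definition of $g$-measure applied with continuous test functions $h$, exactly as in the last display of the proof of Theorem~\ref{thm:decomp}). So the first paragraph of the write-up just records this reduction and the harmless verification that a positive, normalized, continuous $\tilde g$ lies in $\gfunc(\Ot)$.

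For condition~2) there is essentially nothing to do: by hypothesis $\Po$ admits a continuous measure disintegration $\{\Po_y\}$, so for each fixed $b\in\At$ the map $y\mapsto \Po_y({}_0[b])=\int_{\Oo_y}\mathbbm 1_{{}_0[b]}(x)\,\Po_y(dx)$ is continuous (the indicator of a cylinder is continuous on the SFT $\Oo$, being clopen). Then the second expression for $\tilde g$ in \eqref{tildeg}, namely $\tilde g(y)=\sum_{a'\in\pi^{-1}y_1}\big[\sum_{a\in\pi^{-1}y_0}Q_{a,a'}\big]\Po_{Ty}({}_0[a'])$, is, on each cylinder $[y_0y_1]\subset\Ot$ where the index sets $\pi^{-1}y_0,\pi^{-1}y_1$ are locally constant, a finite linear combination (with constant coefficients $Q_{a,a'}$) of the continuous functions $y\mapsto\Po_{Ty}({}_0[a'])$ composed with the continuous shift $T$; hence $\tilde g$ is continuous. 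One should note here that we are free to \emph{choose} the disintegration used in Theorem~\ref{thm:decomp}, since $\tilde g$ is built from whichever disintegration we plug in, and a CMD is such a choice.

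For condition~1) the idea is to show that \eqref{cond1} makes the bracketed inner sum in \eqref{tildeg} collapse to something independent of the disintegration, leaving $\tilde g$ a function only of $(y_0,y_1)$ — a locally constant, hence continuous, function on $\Ot$. Concretely, fix $y$ and write $b=y_0$, $b'=y_1$; condition~1) says $S(b,a'):=\sum_{a\in\pi^{-1}(b)}Q_{a,a'}$ does not depend on $a'$ within the class $\pi^{-1}(b')$, so call the common value $S(b,b')$. Then $\sum_{a\in\pi^{-1}y_0}Q_{a,a'}=S(y_0,y_1)$ for every $a'\in\pi^{-1}y_1$, and \eqref{tildeg} becomes $\tilde g(y)=S(y_0,y_1)\sum_{a'\in\pi^{-1}y_1}\Po_{Ty}({}_0[a'])=S(y_0,y_1)\cdot\Po_{Ty}\big(\pi^{-1}(\{x:x_0\in\pi^{-1}y_1\})\big)$. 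But $\Po_{Ty}$ is concentrated on the fibre $\Oo_{Ty}$, every point of which has zeroth coordinate in $\pi^{-1}((Ty)_0)=\pi^{-1}y_1$, so that last measure is $\Po_{Ty}(\Oo_{Ty})=1$. Hence $\tilde g(y)=S(y_0,y_1)$, a function of the first two symbols only, which is trivially continuous on $\Ot$. (As a sanity check one can remark that this recovers the weak-lumpability/Markov case: $\tilde g$ depending only on $(y_0,y_1)$ is precisely a Markov $g$-function, matching the earlier claim that condition~1) corresponds to the factor being Markov.)

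The main obstacle, such as it is, is bookkeeping rather than depth: one must be careful that the index sets $\pi^{-1}y_0$ and $\pi^{-1}y_1$ appearing in \eqref{tildeg} are determined by $y_0,y_1$ alone (so they cause no discontinuity), that the coefficients $Q_{a,a'}$ are genuine constants, and — for condition~2) — that ``continuous disintegration tested against continuous $f$'' indeed yields continuity of $y\mapsto\Po_y({}_0[b])$, which is immediate because cylinders in an SFT are clopen and hence their indicators are continuous. No estimate or limiting argument beyond what is already in Theorem~\ref{thm:decomp} is needed.
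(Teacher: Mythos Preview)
Your proposal is correct and follows essentially the same route as the paper: for condition~1) you collapse the bracketed sum to a constant $S(y_0,y_1)$ and use $\sum_{a'\in\pi^{-1}y_1}\Po_{Ty}({}_0[a'])=1$, exactly as the paper does; for condition~2) you use continuity of the disintegration against continuous test functions, the only cosmetic difference being that you test against cylinder indicators $\mathbbm 1_{{}_0[a']}$ while the paper packages the same computation via the locally constant function $G^b(x)=\sum_{a\in\pi^{-1}b}p_aP_{a,x_0}/p_{x_0}$.
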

\begin{proof} If $\tilde g$ is indeed a continuous function, then $\nu$ is a $g$-measure by definition.
We only have to show that conditions (1) and (2) imply continuity of $\tilde g$.
Let us start with the first condition (\ref{cond1}). Since
\begin{equation}\label{tildegtwo}
\tilde{g}(y)=\sum_{a' \in \pi^{-1}y_1} \left[\sum\limits_{a \in \pi^{-1}y_0} \frac{p_aP_{a,a'}}{p_{a'}}\right] \Po_{Ty}({}_0[a'])= \sum_{a' \in \pi^{-1}y_1} \left[ \sum\limits_{a \in \pi^{-1}y_0} Q_{a,a'}\right] \Po_{Ty}({}_0[a']).
\end{equation}
Condition (\ref{cond1}) implies that for all $a'\in\pi^{-1}y_1$, the sums in the square-brackets
have the same value. Let us denote the common value by
$S_{y_0,y_1}$. Therefore,
$$
\tilde{g}(y)=S_{y_0,y_1}\sum_{a' \in \pi^{-1}y_1}  \Po_{Ty}({}_0[a'])= S_{y_0,y_1}
$$
since $\mu_{Ty}$ is a Borel probability measure on the fibre $\Oo_{Ty}=\Oo_{(y_1,y_2,\ldots)}$.

Let us now consider the second assumption: suppose $\Po$ admits a continuous measure disintegration on the fibres $\{ \Oo_y \}$, then for  any $f \in C(\Oo)$, $y \to \Po_y(f)=\int fd\Po_y$ is continuous. In particular, since for any $b\in\At$, the function
$$
 G^b(x) =\sum\limits_{a\in\pi^{-1}b} \frac{p_a P_{a,x_0}}{p_{x_0}}, 
$$
is  continuous on $\Oo$ as a function of $x$, we conclude that $\tilde{g}$ is continuous 
and hence $\Pt$ is a $g$-measure.
\end{proof}
\begin{remark}
The first condition is simply a standard (strong) lumpabililty condition for the time-reversal of the original Markov chain.
 Note that lumpabililty conditions for the chain and its reversal are not equivalent in general.
In this instance, however, we only consider the stationary chains, and hence, one should compare
the weak lumpability conditions for the chain and its time reversal. 
It is somewhat surprising that we finish with the strong lumpability condition for the reversed chain,
and not the weak lumpability condition.

 \end{remark}
\begin{remark} The second sufficient condition requires existence of continuous disintegration for $\Po$: i.e., continuity of the map
\begin{equation}\label{continuityint}
y\mapsto \int_{\Oo_y} f(x) \Po_y(dx)
\end{equation}
for every continuous $f$ on $\Oo$. However, we only need continuity of integrals of  rather `simple' functions of the form
\begin{equation}\label{continuityfunc}
G^b(x)= \sum\limits_{a\in\pi^{-1}b} \frac{p_a P_{a,x_0}}{p_{x_0}}, \quad b\in \At.
\end{equation} 
Thus the question is what is the relation between the requirements 
that there exists of a continuous measure disintegration for $\Po$,
and that there exists a disintegration such that for all $b\in\At$, the map $\Ot\ni y\mapsto \int_{\Oo_y} G^b(x)\Po_y(dx)\in\R_+$ is continuous. The first condition of Corollary \ref{pro:sufficientconditions}
then reads: for all $b\in\At$, $G^b(x)\equiv \text{const}$. In  the last section we present example
of an irreducible Markov chain such that $G^b(x)\equiv \text{const}$, but $\Po$ does not admit a continuous disintegration. However, in a 'non-trivial' case $G^b(x)\not\equiv\text{const}$, we believe the difference 
between requiring continuity $y\to\int_{\Po_y} f(x)\Po_y(dx)$ for all continuous $f$, versus, only for simple functions
depending only on the first coordinate $f(x)=f(x_0)$ is not substantial. The main reason is that we believe
that the general hypothesis on regularity of factor measures proposed in Statistical Mechanics \cite{Enter93}
applies to Markov chains as well.
\end{remark}

\begin{remark} In the following sections we will show that the second condition of Corollary \ref{pro:sufficientconditions} includes sufficient
conditions found earlier. More specifically, we will show that conditions of Theorem \ref{thm:Yoo}
imply existence of a continuous measure disintegration for $\mu$.
 
\end{remark}

We will proceed by investigating existence of a continuous measure disintegration using methods developed in thermodynamic formalism for fibred systems.

\section{Thermodynamic formalism for fibred systems}
\label{sec:fibreformalism}
There has been a lot of work done on thermodynamic formalism, equilibrium states and variational principles for fibred systems:
starting from the celebrated work of Ledrappier and Walters \cite{Ledrappier77} on relativized variational principles to the relatively comprehensive theory of Denker and Gordin \cite{Denker99}, as well as extensive
work on random subshifts of finite type \cite{Bogenschutz95}.
We apply the methods developed in this field to provide sufficient conditions for the existence of continuous fibre disintegrations of Markov measures. Moreover, we apply, for the first time in a dynamical setting, a method originating in Mathematical Statistics, developed by Tjur \cites{Tjur74,Tjur75} in the 1970's, which provides a \emph{constructive} approach to the construction of a continuous measure disintegration.

\subsection{Fibres as non-homogeneous subshifts of finite type}

The fibres of the factor map $\pi:\Oo\to\Ot$ are not translation invariant. However, 
they admit a nice topological description: namely, as  non-homogeneous or random subshifts of finite type.

\begin{definition}
Suppose $\mathbb S=\{ S_n \}_{n\ge 0}$ is a collection of non-empty finite sets of bounded size. Let
 $\Oo^{\mathbb S} = \prod\limits_{n\in \Z_+} S_n$ be the corresponding product space. 
 Assume also that we are given a sequence of $0/1$-matrices $\mathbb M=(M_n)_{n \in \Z_+}$, with $M_n$ indexed by $S_n \times S_{n+1}$, such that for each $n$, $M_n$ is \emph{reduced}:
it has no columns or rows with only $0$ entries. 
 
Then the set
\[
{\Oo_{\mathbb M}} = \left\{ x \in \Oo^{\mathbb S}: M_n(x_n,x_{n+1}) = 1\mathrm{,\ for\ all\ }n \in \Zp \right\},
\]
is called a non-homogeneous (random) subshift of finite type corresponding to the sequence $\mathbb M$.
\end{definition}

It is easy to see that if $\Oo=\Oo_M$ is a SFT, $\pi:\Oo\to\Ot$ is a 1-block factor map,
then for any $y \in \Ot$, the fibre $\Oo_y$ is  a non-homogeneous SFT: indeed,
let   $S_n^y = \pi^{-1}(y_n)$, and  put
$M_n^y(x_n,x_{n+1}) = 0 \Leftrightarrow M(x_n,x_{n+1})=0$ for all $n \in \Zp$ and $x_n \in S_n, x_{n+1} \in S_{n+1}$.  In other words, $\Oo_y=\Oo_{\mathbb M^y}$, for
$
\mathbb M^y=\{ M_n^y\}$, where $M_n^y$  is a submatrix of $M$ corresponding
to rows $\pi^{-1}(y_n)$ and  columns $\pi^{-1}(y_{n+1})$.

We recall the notion of a \emph{transitive} non-homogeneous subshift of finite type introduced by Fan and Pollicott \cite{Fan00}:
\begin{definition}
A non-homogeneous SFT $\Oo_{\mathbb M}$, corresponding to a reduced sequence of matrices $\mathbb M=(M_n)_{n \in \Z_+}$, is called \mybf{transitive}, i.e., there exists $m$ such that
\begin{equation}\label{transdef}
\prod_{j=n}^{n+m} M_j >0
\end{equation}
for all $n\ge 0$. The transitivity index of $\Oo_{\mathbb M}$ is defined as
the minimal $m$ such that 
(\ref{transdef}) holds for all $n\ge 0$.
\end{definition}

{ 
It turns out that the fibre mixing condition is equivalent to the requirement that  for every $y\in \Ot$ the fibre $\Oo_y$ is a transitive non-homogeneous SFT. Moreover, the transitivity index $m_y$ of $\Oo_y$ is bounded; in other words, there exists $m\ge 1$ such that the products

\begin{equation}\label{transdef2}
\prod_{j=n}^{n+m} M_j^{y} >0
\end{equation}
are positive for all $n\in \N$ and all $y\in \Ot$.
}

Let us start by recalling  a standard notion of allowed words in subshifts: if $X\subset {A}^{\Z}$ is a subshift,
then the word $(a_0,\ldots,a_k)\in A^{k+1}$ is called allowed in $X$, if there exists $x\in X$ such that
$x_0=a_0,\ x_1=a_1,\ldots,x_k=a_k$. The set of all allowed words of a subshift $X$ is denoted by $\mathcal L(X)$, and is called the language of $X$.

The following charaterization of fibre-mixing 1-block factor maps $\pi$ was given in \cite[Lemma 3.2]{Yoo}.

\begin{lemma}\label{lemmayoo} Suppose $\Oo$, $\Ot$ are mixing subshifts of finite type, and  $\pi: \Oo\rightarrow \Ot$ be a 1-block factor map. Then the following are equivalent:
\begin{enumerate}
\item $\pi$ is fiber-mixing, i.e., for every $x,\tilde x\in\Oo_y$ and all $n\in \N$, there exists $m\ge 0$ and $\bar x\in \Omega_y$ such that $$(x_0,\ldots, x_n)=(\bar x_0,\ldots, \bar x_n)\text{ and }
(\tilde x_{n+m},\tilde x_{n+m+1},\ldots)= (\bar x_{n+m},\bar x_{n+m+1},\ldots).
$$
\item  $\pi$ is sub-positive, i.e., there exists $k\in \N$ such that for any word $y_0^k=
(y_0,y_1,\ldots,y_k)\in \mathcal L(\Ot)$, any two allowed words $a_0^k, b_0^k\in \mathcal L(\Oo)$ such that 
$\pi(a_0^k)=\pi(b_0^k)=y_0^k$, there exists a third allowed word $c_0^k\in \mathcal L(\Oo)$ satisfying
$$
\pi(c_0^k) = y_0^k,\quad c_0=a_0,\quad{and}\quad c_k=b_k.
$$\end{enumerate}
\end{lemma}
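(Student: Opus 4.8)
The plan is to prove the two implications separately. The direction $(2)\Rightarrow(1)$ is a routine cut-and-paste argument; $(1)\Rightarrow(2)$ is the substantial one, and I would carry it out by a compactness/contradiction argument whose whole point is to extract a \emph{uniform} bridging length out of the pointwise fibre-mixing hypothesis.

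For $(2)\Rightarrow(1)$ I would start from the sub-positivity constant $k$. Given $y\in\Ot$, points $x,\tilde x\in\Oo_y$ and $n\in\N$, the window $(y_n,\dots,y_{n+k})$ is an allowed word of $\Ot$ and $(x_n,\dots,x_{n+k})$, $(\tilde x_n,\dots,\tilde x_{n+k})$ are allowed words of $\Oo$ projecting onto it; sub-positivity then supplies an allowed word $c_n,\dots,c_{n+k}$ over that window with $c_n=x_n$ and $c_{n+k}=\tilde x_{n+k}$. Since $\Oo=\Oo_M$ is defined by nearest-neighbour constraints, the splice
\[
\bar x\ :=\ x_{[0,n]}\;c_{[n,n+k]}\;\tilde x_{[n+k,\infty)}
\]
(the three blocks overlapping in the single symbols at positions $n$ and $n+k$, which agree by construction) is a point of $\Oo$, it projects to $y$, and it agrees with $x$ on $[0,n]$ and with $\tilde x$ on $[n+k,\infty)$. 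Thus $\pi$ is fibre-mixing, in fact with the \emph{uniform} value $m=k$; as a byproduct this recovers the boundedness of the transitivity index quoted just before the lemma.

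For $(1)\Rightarrow(2)$ I would argue by contradiction: if sub-positivity fails, then for every $N$ there is an allowed word $w^{N}$ of $\Ot$ of length $N+1$ and allowed words $\alpha^{N},\beta^{N}$ of $\Oo$ over $w^{N}$ admitting no word $c$ over $w^{N}$ with $c_{0}=\alpha^{N}_{0}$, $c_{N}=\beta^{N}_{N}$. Extend $\alpha^{N},\beta^{N}$ to points $x^{N},\hat x^{N}\in\Oo$ with $x^{N}_{[0,N]}=\alpha^{N}$, $\hat x^{N}_{[0,N]}=\beta^{N}$, and pass to a subsequence along which $x^{N}\to x^{*}$, $\hat x^{N}\to\hat x^{*}$. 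The decisive observation is that $\pi(x^{N})$ and $\pi(\hat x^{N})$ agree on $[0,N]$ with $N\to\infty$, so continuity of $\pi$ forces $\pi(x^{*})=\pi(\hat x^{*})=:y^{*}$, i.e. \textbf{both limits lie in one fibre} $\Oo_{y^{*}}$ -- this is precisely what makes the fibre-mixing hypothesis applicable. Applying fibre-mixing to $x^{*},\hat x^{*}\in\Oo_{y^{*}}$ with $n=0$ yields $m^{*}\ge 0$ and $\bar x^{*}\in\Oo_{y^{*}}$ with $\bar x^{*}_{0}=x^{*}_{0}$ and $\bar x^{*}_{j}=\hat x^{*}_{j}$ for $j\ge m^{*}$, so $\bar x^{*}_{[0,m^{*}]}$ bridges $x^{*}_{0}$ to $\hat x^{*}_{m^{*}}$ over $y^{*}_{[0,m^{*}]}$. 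Taking $N$ far enough along the subsequence (in particular $N>m^{*}$), this same word bridges $\alpha^{N}_{0}$ to $\beta^{N}_{m^{*}}$ over $w^{N}_{[0,m^{*}]}$; splicing it, via the memory-one structure of $\Oo$, with the tail $\beta^{N}_{[m^{*},N]}$ of $\beta^{N}$ itself gives an allowed word over $w^{N}$ from $\alpha^{N}_{0}$ to $\beta^{N}_{N}$, contradicting the choice of $w^{N},\alpha^{N},\beta^{N}$. Hence $\pi$ is sub-positive.

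I expect the only genuine obstacle to be this second implication, and inside it two delicate points: first, recognising that the compactness limit automatically places the two sequences in a common fibre, without which fibre-mixing cannot be invoked at all; and second, reconciling the length $m^{*}$ returned by fibre-mixing with the prescribed length $N$, which I would handle as above by recycling a tail of $\beta^{N}$ together with the nearest-neighbour nature of the SFT. The mixing hypotheses on $\Oo,\Ot$ are needed only at the mild level of guaranteeing that admissible words extend to points; the core of the argument is the interplay of compactness with fibre-mixing.
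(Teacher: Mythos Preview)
The paper does not give its own proof of this lemma; it is quoted from \cite[Lemma 3.2]{Yoo10} without argument. Your proposal therefore cannot be compared to a proof in the paper, but it stands on its own as a correct proof. The direction $(2)\Rightarrow(1)$ is exactly the cut-and-paste argument one expects, and your $(1)\Rightarrow(2)$ via compactness is the standard way to upgrade the pointwise fibre-mixing bridge to a uniform one; the two points you flag as delicate (forcing the limit points into a common fibre, and padding the length-$m^{*}$ bridge out to length $N$ using a tail of $\beta^{N}$) are handled correctly. One minor remark: extension of allowed words to full points requires nothing beyond $\Oo$ being a subshift, so the mixing hypotheses are in fact not used anywhere in your argument.
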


A simple corollary of this result is the following:

\begin{corollary} 
\label{lem:lemma_mixing_primitive}
Under conditions of Lemma \ref{lemmayoo}, a 1-block factor map $\pi: \Oo\rightarrow \Ot$ is fiber-mixing if and only if for each $y\in\Ot$, the fibre  $\Oo_y$ is a transitive non-homogenous SFT.\end{corollary}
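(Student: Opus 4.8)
The plan is to deduce this corollary from Lemma~\ref{lemmayoo} together with the description of fibres $\Oo_y$ as non-homogeneous SFTs $\Oo_{\mathbb M^y}$ given above. The key observation is that the fibre-mixing condition, as reformulated in part~(2) of Lemma~\ref{lemmayoo} (sub-positivity), is precisely a \emph{uniform} statement over all $y\in\Ot$, whereas transitivity of $\Oo_y$ is an a priori $y$-dependent statement; so the content of the corollary is (a) that pointwise transitivity of every fibre is equivalent to fibre-mixing, and (b) implicitly, that the transitivity indices $m_y$ are then uniformly bounded. I would state and prove both implications.

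For the implication ``fibre-mixing $\Rightarrow$ each $\Oo_y$ transitive, with uniformly bounded index'': take the $k$ from sub-positivity (part~(2) of Lemma~\ref{lemmayoo}). I would show that $m:=k$ works as a uniform transitivity index, i.e.\ that $\prod_{j=n}^{n+m}M_j^y>0$ for all $n$ and all $y$. Fix $n$, $y$, and indices $a\in S_n^y=\pi^{-1}(y_n)$, $b\in S_{n+m}^y=\pi^{-1}(y_{n+m})$; by stationarity of the SFT $\Oo$ it suffices to treat $n=0$. The entry $\bigl(\prod_{j=0}^{m}M_j^y\bigr)_{a,b}$ is positive iff there is an allowed word $c_0^m\in\mathcal L(\Oo)$ with $\pi(c_0^m)=y_0^m$, $c_0=a$, $c_m=b$. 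Since $a$ is the first symbol of \emph{some} allowed word over $y_0^m$ (because $M_0^y,\dots$ are reduced, equivalently because $a\in\pi^{-1}(y_n)$ lies on the fibre) and likewise $b$ is the last symbol of some allowed word over $y_0^m$, sub-positivity with the word $y_0^m$ produces exactly such a $c_0^m$. Hence every entry is positive, giving $\prod_{j=0}^{m}M_j^y>0$, uniformly in $y$.

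For the converse ``each $\Oo_y$ transitive $\Rightarrow$ fibre-mixing'': suppose every fibre is transitive. Given $y\in\Ot$, $x,\tilde x\in\Oo_y$, and $n\in\N$, let $m=m_y$ be the transitivity index of $\Oo_y$. Since $\prod_{j=n}^{n+m}M_j^y>0$, the entry indexed by $(x_n,\tilde x_{n+m+1})$—more carefully, $(x_{n}, \tilde x_{n+m})$ reading within the block—is positive, which means there is an allowed block in $\Oo_y$ joining the prefix $x_0^n$ (extended by the positive block) to the suffix $\tilde x_{n+m}^\infty$; concatenating $x_0^{n}$, this connecting block, and $\tilde x_{n+m}^\infty$ yields a point $\bar x\in\Oo_y$ with $\bar x_0^n=x_0^n$ and $\bar x_{n+m}^\infty=\tilde x_{n+m}^\infty$, which is exactly the fibre-mixing condition. (One must check the concatenated sequence is genuinely in $\Oo$, i.e.\ all transitions are allowed—this is immediate since each piece is allowed and the junctions are guaranteed by positivity of the relevant matrix entries.)

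The main obstacle I anticipate is purely bookkeeping: correctly lining up the non-homogeneous transition matrices $M_j^y$ (which act between coordinates $j$ and $j+1$) with the block-coding language statement in part~(2) of Lemma~\ref{lemmayoo}, and making sure the "reduced" hypothesis is invoked so that the distinguished symbols $a,b$ really do extend to full allowed words over the prescribed $y$-block. There is no deep content beyond Lemma~\ref{lemmayoo}; the corollary is essentially a translation between the ``sliding-block/language'' formulation of fibre-mixing and the ``matrix-product'' formulation of transitivity, and the only genuinely new assertion—uniform boundedness of $m_y$—falls out of the fact that the $k$ in sub-positivity does not depend on $y$.
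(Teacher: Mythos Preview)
Your proposal is correct and follows essentially the same route as the paper: the forward direction uses the sub-positivity characterisation from Lemma~\ref{lemmayoo} to produce the connecting word $c_0^k$ (the paper phrases this by contradiction, you do it directly, but the content is identical), and the reverse direction constructs the bridging point $\bar x$ from positivity of the matrix product exactly as the paper does. The only discrepancies are the off-by-one issues you already anticipated (e.g., $\prod_{j=n}^{n+m}M_j^y$ is indexed by $S_n\times S_{n+m+1}$, not $S_n\times S_{n+m}$, so with $m=k$ you are one step off from the sub-positivity word length; the paper takes the product $\prod_{j=n}^{n+k-1}$), and the need to pass to the reduced presentation of $\mathbb M^y$ before invoking extendability of $a$ and $b$---both are genuine bookkeeping and not a gap in the argument.
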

\begin{proof}
If $\pi$ is fibre-mixing, then $\pi$ is sub-positive for some integer $k$. Consider an arbitrary point $y\in\Ot$  and an integer $n$. Suppose that the 
 $|S_n|\times |S_{n+k}|$ matrix
\begin{equation}\label{prodmat}
M_{n,k-1}^{(y)}:=\prod_{j=n}^{n+k-1} M^{(y)}_j,
\end{equation}
is not strictly positive.
Thus there exist $a_{n}\in S_n=\pi^{-1}(y_n)$ and $b_{n+k}\in S_{n+k}=\pi^{-1}(y_{n+k})$ such that
the  $(a_n,b_{n+k})$-element of $M_{n,k-1}^{(y)}$ is zero. Informally, that means that we cannot connect
$a_{n}$ and $b_{n+k}$ by an allowed path in $\Omega_y$ of length $k+1$. On the other hand,
we can extend $a_n$ and $b_{n+k}$ to configurations $x,\tilde x\in \Omega_y$ such that $x_n=a_n$ and $\tilde x_{n+k}=b_{n+k}$. Therefore, if we consider two words of length $k+1$, namely, $x_{n}^{n+k}$ and $\tilde x_{n}^{n+k}$,
then $\pi(x_{n}^{n+k})=\pi(\tilde x_{n}^{n+k})=y_{n}^{n+k}$. Since $\pi$ is sub-positive, there exists
a third word
$\bar x_{n}^{n+k}$  such that $\pi(\bar x_{n}^{n+k})=y_{n}^{n+k}$ and $\bar x_{n}=x_n=a_n$, $\bar x_{n+k}=\tilde x_{n+k}=b_{n+k}$. Therefore, we arrived to a contradiction with the assumption that $M_{n,k-1}^{(y)}(a_n,b_{n+k})=0$. Thus all products of the form (\ref{prodmat}) are strictly positive. In particular,
this implies that for each fibre $\Omega_y$, the transitivity index $m_y$ is bounded by $k$ from above.

In the opposite direction, assume that each fibre is a transitive non-homogeneous subshift of finite type.
Suppose $x,\tilde x \in \Oo_y$.
Since $\Oo_y$ is  assumed to be  transitive, then for any $i\in\Z$, we have $\prod_{n=i}^{i+m(i)} M_n^{(y)} > 0$ for some finite $m(i)$, and therefore there exists an $\hat{x} \in \Oo_y$  with $\hat{x}_i = x_i$, $\hat{x}_{i+m(i)+1} = \bar{x}_{i+m(i)+1}$
such that
\[
M^{(y)}_i(\hat{x}_i,\hat{x}_{i+1}) \dotsb M^{(y)}_{i+m(i)}(\hat{x}_{i+m(i)},\hat{x}_{i+m(i)+1})>0.\]
However, this means that $x_0^{i} \hat{x}_{i+1}^{i+m} \bar{x}_{i+m+1}^\infty \in \Oo_y$m and therefore $\Oo_y$ is fibre mixing.
\end{proof}

\subsection{Non-homogeneous equilibrium states}
\label{sec:nonhomogeneouseqstates}
Now we are ready to apply methods of thermodynamic formalism to construct directly the conditional measures on the fibres.
The first and the most direct method is to use the approach of \cite{Verbitskiy11} which relies on the fundamental results of Fan and Pollicott \cite{Fan00} for transitive non-homogenous subshifts of finite type. Since the proof in the Markov case considered in the present paper is almost identical to (and, in fact, simpler than) the proof in the case of fully supported
$g$-measures in \cite{Verbitskiy11}, we will only sketch the necessary steps.
We start by introducing the \emph{averaging operators} acting on spaces of continuous functions on fibres $\Oo_y$:
$$
P_n^y f(x) = \sum_{\substack{a_0^n\in\pi^{-1}y_0^n:\\ a_0^nx_{n+1}^{+\infty}\in \Oo_y} }G_n^y(a_0\ldots a_nx_{n+1}\ldots) f(a_0\ldots a_nx_{n+1}\ldots),
$$
where  $G_n^y(x)$ is defined on $\Oo_y$ by
\begin{equation}\label{potGN}
 G_n^y(x) =\frac {  Q(x_0,x_{1})\ldots Q(x_n,x_{n+1})}
 {\sum\limits_{\substack{a_0^n:\ a_0^nx_{n+1}^{+\infty}\in \Oo_y}}
 Q(a_0,a_{1})\ldots Q(a_n,x_{n+1})},\quad Q(a,a') =\frac {p_a P_{a,a'}} {p_{a'}}, \ a,a'\in\Ao.
\end{equation}
Note that $\sum_{a_0^n:\ a_0^nx_{n+1}^{+\infty}\in \Oo_y}G_n^y(a_0\ldots a_nx_{n+1}\ldots)=1$ for all $x\in\Oo_y$, and hence $P_n^y \mathbf 1 = \mathbf 1$. A probability measure $\Po^y$ on $\Oo_y$ is called a non-homogeneous equilibrium state associated to $\mathbb G^y=\{ G_n^y\}$ if 
$$
\int_{\Oo_y} P_n^y f(x) \mu^y(dx) = \int_{\Oo_y}  f(x) \mu^y(dx)
$$
for all $f\in C(\Oo_y)$ and every $n\in\N$. Next we will show that the equilibrium states $\Po^y$ form a continuous measure disintegration, for now, we use a superscript to distinguish from the notation for a disintegration.
\\
The sequence of $\mathbb G^y=\{ G_n^y\}$ given by (\ref{potGN}), can easily be seen
to satisfy the conditions of Theorem 1 of \cite{Fan00}, and we immediately get the following corollary:

\begin{corollary}  Suppose $\Oo, \Ot$ are irreducible SFT's, and a 1-block surjective factor map $\pi:\Oo\to\Ot$ is such that
 $\Oo_y$ is a transitive non-homogenous SFT for every $y\in\Ot$. Then
 for each $y\in\Ot$ there exists a unique non-homogeneous equilibrium state $\mu^y$ associated to $\mathbb G^y=\{ G_n^y\}$. Moreover,  
\begin{equation}\label{convergencefibre}
P_n^y f(x) \rightarrow \int_{\Oo_y} f(x) \mu^y(dx)
\end{equation}
 uniformly on $\Oo_y$, as $n\to\infty$.
 \end{corollary}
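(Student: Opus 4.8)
The plan is to verify that, for each fixed $y\in\Ot$, the sequence of potentials $\mathbb G^y=\{G_n^y\}$ defined in (\ref{potGN}) satisfies the hypotheses of Theorem~1 of Fan and Pollicott \cite{Fan00}, and then invoke that theorem verbatim. Recall that for a non-homogeneous SFT equipped with a sequence of already $L^1$-normalized weights (so that the associated averaging operators fix the constants), the Fan--Pollicott theorem delivers exactly the two conclusions we want --- existence of a unique non-homogeneous equilibrium state and the uniform convergence (\ref{convergencefibre}) --- provided that (a) the underlying non-homogeneous SFT is transitive with a finite transitivity index, and (b) the weights enjoy a bounded-distortion (summable-variation) estimate that is uniform in the level $n$.

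First I would dispose of (a). By hypothesis every fibre $\Oo_y=\Oo_{\mathbb M^y}$ is a transitive non-homogeneous SFT, so there is a finite $m_y$ with $\prod_{j=n}^{n+m_y}M_j^y>0$ for all $n\ge 0$; this is precisely the transitivity assumption required in \cite{Fan00} for the space $\Oo_y$. Note that at this stage we only use transitivity of one fibre at a time and do \emph{not} need the index $m_y$ to be bounded uniformly over $y$ --- that stronger uniformity, which by Corollary~\ref{lem:lemma_mixing_primitive} amounts to fibre mixing, will be needed only later, when passing to continuity of $y\mapsto\int f\,d\mu^y$.

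For (b) I would exploit the very simple structure of the weights. The building block $Q(a,a')=p_aP_{a,a'}/p_{a'}$ depends on only two coordinates, and since the chain is irreducible the invariant vector $p$ is strictly positive, so on every allowed transition $Q(a,a')$ is pinched between two positive constants depending only on $(P,p)$. Hence $G_n^y(x)=Q(x_0,x_1)\cdots Q(x_n,x_{n+1})\big/\!\sum_{a_0^n}Q(a_0,a_1)\cdots Q(a_n,x_{n+1})$ depends only on the finitely many coordinates $x_0^{n+1}$, is itself pinched between two positive constants, and --- because $\log Q$ is locally constant, with zero variation beyond lag one --- has distortion that is controlled geometrically, uniformly in $n$ and in $y$. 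This is the easiest, ``finite-range'' case of the Fan--Pollicott regularity condition. The normalization $\sum_{a_0^n}G_n^y(a_0\ldots a_nx_{n+1}\ldots)=1$, i.e.\ $P_n^y\mathbf 1=\mathbf 1$, has already been recorded in the text, and one checks directly that the $P_n^y$ are the normalized compositions of the elementary transfer operators on $\Oo_y$ with kernel $Q$; this is exactly the form of operator treated in \cite{Fan00}.

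With (a) and (b) verified, Theorem~1 of \cite{Fan00} produces a unique probability measure $\mu^y$ on $\Oo_y$ fixed by all the $P_n^y$ --- the unique non-homogeneous equilibrium state for $\mathbb G^y$ --- together with the uniform convergence $P_n^y f\to\int_{\Oo_y}f\,d\mu^y$ for every $f\in C(\Oo_y)$, which is the assertion of the corollary. The only genuine work is bookkeeping: aligning the normalization convention and the direction of the transfer operators in (\ref{potGN}) with those of \cite{Fan00}, and confirming that the constants in the distortion estimate really are independent of $n$. Given that $Q$ has finite range and is pinched between two positive constants, I expect this translation to be routine rather than the main obstacle --- if there is a subtle point, it is ensuring the distortion (equivalently, contraction) bound is uniform as $n\to\infty$, which is where the transitivity index $m_y$ enters.
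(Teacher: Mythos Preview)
Your proposal is correct and matches the paper's approach exactly: the paper simply states that the sequence $\mathbb G^y=\{G_n^y\}$ given by (\ref{potGN}) ``can easily be seen to satisfy the conditions of Theorem~1 of \cite{Fan00}'' and records the corollary as an immediate consequence. Your write-up is in fact more detailed than the paper's, spelling out the transitivity hypothesis and the finite-range/bounded-distortion verification that the paper leaves implicit.
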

Furthermore, the convergence in (\ref{convergencefibre}) turns out to be uniform in $y$ as well. Using this rather strong property ,we also immediately get the following corollary of Lemma 3.4 and 3.5 \cite{Verbitskiy11}:
 
 \begin{proposition}\label{integraleq}
Under the above conditions, the family $\{\Po^y\}$ of non-homogeneous equilibrium states on $\Oo_y$
associated to $\mathbb G^y$ forms a disintegration of $\mu$, i.e., for every continuous function $f$
one has
$$
\int_{\Oo} f(x)\mu(dx) =\int_{\Ot} \int_{\Oo_y} f(x)\Po^y(dx) \Pt(dy).
$$
Moreover, the  family $\{\Po^y\}$ is in fact continuous: for every continuous $f$,
$$
y\mapsto  \int_{\Oo_y} f(x)\Po^y(dx)
$$
is a continuous function on $\Pt$.
\end{proposition}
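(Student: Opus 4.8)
The plan is to follow the scheme of \cite[Lemmas~3.4 and~3.5]{Verbitskiy11}, which is in fact simpler in the present Markov setting because the potentials $G_n^y$ in \eqref{potGN} are locally constant. Two assertions must be proved — that $\{\Po^y\}$ disintegrates $\Po$, and that $y\mapsto\int_{\Oo_y}f\,d\Po^y$ is continuous — and both are powered by the convergence \eqref{convergencefibre}, used not merely uniformly on each fixed fibre (as in the preceding corollary) but \emph{uniformly in $y$}. Establishing this $y$-uniformity is where the real work lies, and I expect it to be the main obstacle: I would deduce it from the quantitative convergence estimates of \cite{Fan00} after isolating two bounds that are independent of $y$ — first, by Corollary~\ref{lem:lemma_mixing_primitive} the transitivity indices of all fibres $\Oo_y$ are bounded by one integer $m$, see \eqref{transdef2}; second, by irreducibility of $P$ the quantities $Q(a,a')=p_aP_{a,a'}/p_{a'}$ are bounded away from $0$ and $\infty$ on $\{(a,a'):P_{a,a'}>0\}$, which controls the oscillations of the $G_n^y$ uniformly in $y$. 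Feeding these two uniform inputs into the Fan--Pollicott estimates should give, for every $f\in C(\Oo)$ and every $\e>0$, an $n=n(f,\e)$ with $\bigl|P_n^y f(x)-\int_{\Oo_y}f\,d\Po^y\bigr|<\e$ for all $y\in\Ot$ and all $x\in\Oo_y$; everything after this is bookkeeping.

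The algebraic core is a single identity. Since $Q(a,a')=p_aP_{a,a'}/p_{a'}$, a product $Q(a_0,a_1)\cdots Q(a_N,x_{N+1})$ telescopes to $p_{a_0}P_{a_0,a_1}\cdots P_{a_N,x_{N+1}}/p_{x_{N+1}}$, and the stray factor $p_{x_{N+1}}$ cancels between the numerator of $G_N^y$ and its normalizing denominator in \eqref{potGN}. Carrying out this cancellation, I would verify that for every cylinder $[a_0^n]\subset\Oo$, every $N\ge n$, every $y\in\Ot$ and every $x\in\Oo_y$,
\[
P_N^y\mathbf 1_{[a_0^n]}(x)=\P\bigl(X_0^n=a_0^n\,\big|\,Y_0^N=y_0^N,\ X_{N+1}=x_{N+1}\bigr),
\]
so that $P_N^y$, applied to indicators of cylinders, literally computes finite one-sided conditional probabilities of the Markov measure $\Po$ (in particular the left-hand side depends on $x$ only through $x_{N+1}$). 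For the disintegration I would then run a reverse-martingale argument: put $\mathcal G_N=\sigma(Y_0,\dots,Y_N,X_{N+1},X_{N+2},\dots)$; since $Y_{N+1}=\pi(X_{N+1})$ is $\mathcal G_N$-measurable, the $\mathcal G_N$ decrease in $N$ with $\bigcap_N\mathcal G_N\supseteq\sigma(Y_0,Y_1,\dots)$. By the Markov property the conditional law of $X_0^N$ given $\mathcal G_N$ depends only on $(Y_0^N,X_{N+1})$, so the identity above reads $\Po([a_0^n]\mid\mathcal G_N)=P_N^{\pi(x)}\mathbf 1_{[a_0^n]}(x)$. The reverse martingale convergence theorem makes the left-hand side converge $\Po$-a.s.\ to $\Po\bigl([a_0^n]\mid\bigcap_N\mathcal G_N\bigr)$, while the preceding corollary makes the right-hand side converge to $\Po^{\pi(x)}([a_0^n])$, a function measurable with respect to $\sigma(Y_0,Y_1,\dots)$. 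Equating the two limits, testing against $h\circ\pi$ for bounded measurable $h$ on $\Ot$, and extending by linearity and uniform approximation in $f$, I would obtain $\int_\Oo f\,(h\circ\pi)\,d\Po=\int_\Ot\bigl(\int_{\Oo_y}f\,d\Po^y\bigr)h\,d\Pt$ for all $f\in C(\Oo)$; the case $h\equiv1$ is the disintegration identity, and the case of cylinder indicators shows $y\mapsto\int_{\Oo_y}f\,d\Po^y$ is $\Pt$-measurable.

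For continuity I would argue directly from the $y$-uniform form of \eqref{convergencefibre}. Fix a cylinder $[a_0^n]$ and $\e>0$, and choose $N\ge\max(n,m)$ with $\bigl|P_N^y\mathbf 1_{[a_0^n]}(x)-\Po^y([a_0^n])\bigr|<\e$ for all $y\in\Ot$ and all $x\in\Oo_y$. If $y,y'\in\Ot$ agree on coordinates $0,\dots,N+1$, then, since every fibre is a transitive non-homogeneous SFT of index $\le m$ and $N\ge m$, the symbols occurring at site $N+1$ in configurations of $\Oo_y$ are exactly $\pi^{-1}(y_{N+1})$, and likewise for $\Oo_{y'}$; and since $P_N^y\mathbf 1_{[a_0^n]}$ depends on $y$ only through $y_0^N$ and on its argument only through the $(N{+}1)$st symbol, one can pick $x\in\Oo_y$ and $x'\in\Oo_{y'}$ with $P_N^y\mathbf 1_{[a_0^n]}(x)=P_N^{y'}\mathbf 1_{[a_0^n]}(x')$, whence $\bigl|\Po^y([a_0^n])-\Po^{y'}([a_0^n])\bigr|<2\e$. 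Thus $y\mapsto\Po^y([a_0^n])$ is continuous, and a general $f\in C(\Oo)$ is handled by approximating it uniformly by finite linear combinations $f_k$ of cylinder indicators: since $\bigl|\int_{\Oo_y}f\,d\Po^y-\int_{\Oo_y}f_k\,d\Po^y\bigr|\le\|f-f_k\|_\infty$, the continuous functions $y\mapsto\int_{\Oo_y}f_k\,d\Po^y$ converge uniformly to $y\mapsto\int_{\Oo_y}f\,d\Po^y$, which is therefore continuous on $\Ot$.
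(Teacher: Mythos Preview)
Your proof is correct and follows the route the paper indicates via \cite{Verbitskiy11}: the doubly uniform convergence of $P_n^y f$---driven by the uniform bound on the transitivity indices from Corollary~\ref{lem:lemma_mixing_primitive} together with the uniform positivity of $Q$ on allowed transitions---is exactly what yields both the disintegration identity and the continuity of $y\mapsto\int f\,d\Po^y$. Your reverse-martingale packaging of the disintegration step is slightly more elaborate than strictly necessary (directly integrating the identity $\int_\Oo P_N^{\pi(x)}\mathbf 1_{[a_0^n]}(x)\,d\Po(x)=\Po([a_0^n])$ and passing to the limit by bounded convergence already gives $\int_\Ot\Po^y([a_0^n])\,d\Pt(y)=\Po([a_0^n])$), but it is valid and in fact delivers the stronger almost-sure identification $\Po\bigl([a_0^n]\mid\sigma(Y_0,Y_1,\dots)\bigr)=\Po^{\pi(\cdot)}([a_0^n])$.
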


Therefore, by Proposition \ref{pro:sufficientconditions}, we conclude that $\Pt$ is a $g$-measure.

\begin{remark}
The above method can be summarized as follows. The conditional measures on fibres are equilibrium states for the same potential as the starting measure $\Po$. One needs to establish uniqueness of equilibrium states on the fibres first, and then prove continuity of the resulting family. In this particular case, one obtains continuity from the double uniform convergence of the averaging (transfer) operators. In the following section, we are going to show that uniqueness on each fibre is in fact sufficient, and one obtains continuity effectively for free.
\end{remark}

\subsection{Constructive approach to conditioning on fibres}
\label{sec:tjurformalism}
General results on the existence of measure disintegrations are not constructive. To alleviate this problem, Tjur  \cites{Tjur74,Tjur75} proposed  a more direct method: the conditional measures $\Po_y$ on fibres can be obtained directly, in a unique way, as a limit of measures conditioned on sets with positive measure around $y$.

Suppose $y\in \Ot$ and let $D_{y}$ be the set of pairs $(V,B)$, where $V$ is an open neighbourhood of $y$ and $B$ is a subset of $V$ with positive measure:
\[
D_{y} = \left\{ (B,V): V \mathrm{\ open,\ }y \in V,\ B \subset V,\  \Pt(B) > 0 \right\}.
\]
Now equip the collection $D_{y}$ with a partial order given by $(V_1,B_1)\succcurlyeq (V_2,B_2)$, if $V_1\subseteq V_2$. This partial order is upwards directed, as, for any $(V_1,B_1),(V_2,B_2) \in D_{y_0}$, there exists an element $(V_3,B_3)\in D_{y_0}$ such that $(V_3,B_3)\succcurlyeq (V_1,B_1)$ and
 $(V_3,B_3)\succcurlyeq (V_2,B_2)$. For each $(V,B)\in D_{y}$ we define a conditional measure $\Po^B$:
$$
\mu^B(\cdot)=\mu( \cdot\, |\, \pi^{-1}B).
$$
Since $D_{y}$ is upwards directed, the collection of conditional measures 
\[
\net_y = \left\{ \Po^B(\cdot):(V,B) \in D_y \right\},
\]
is a \mybf{net}, or a \mybf{generalized sequence}, in the space of probability measures on $\Oo$. We can now define the limit or accumulation points of this net as follows:
\begin{definition}
We call a measure $\tilde{\Po}$ on $\Oo$ an accumulation point of the net $\net_y$ if there exists a sequence $\{(V_n,B_n)\}_{n\ge 1} \subset D_y$, $n\ge 1$, such that
\[
\Po^{B_n}=\Po(\cdot|\pi^{-1}B_n) \to \tilde{\Po}, \mathrm{\ as\ }n \to \infty,
\]
weakly. Denote the set of all possible accumulation points by $\tjur_y$.
\end{definition}
By standard compactness arguments we immediately conclude that 
$\tjur_y \neq \varnothing$, and for each $\lambda_y\in \tjur_y$, one has
$\lambda_y(\Oo_y)=1$.
\begin{definition} The point $y\in\Ot$ is called a Tjur point if $\tjur_y$ is a singleton, i.e., the net $\net_y$ has a limit, which we denote by $\Po^y$.
\end{definition}
Two basic theorems by Tjur provide sufficient conditions for the existence of continuous measure disintegrations. The first theorem states that, when conditional measures $\Po^y$ are defined $\Pt$-almost everywhere, they form a measure disintegration.

\begin{theorem}\label{thm:tjur1}{\cite[Theorem 5.1]{Tjur75}} Suppose $\pi:\Oo\to\Ot$ is a continuous surjection, as defined above, and $\Pt=\Po\circ\pi^{-1}$. Assume, furthermore, that $\Pt$-almost all $y \in \Ot$ are Tjur points. Then, for any $f\in L^1(\Oo,\Po)$, $f$ is $\Po^y$-integrable for $\Pt$-almost all $y$,  and the function $y\mapsto \int f d\Po^y$ is $\Pt$-integrable and
$$
\int_\Oo f(x) \Po(dx) =\int_{\Ot} \Bigl[\int_{\Oo_y} f(x)\mu^y(dx)\Bigr]\Pt(dy).
$$
\end{theorem}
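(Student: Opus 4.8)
The plan is to establish the identity first for continuous $f$ and then bootstrap to all of $L^1(\Oo,\mu)$ by a monotone class argument. The crux is that, although $\net_y$ is a net rather than a sequence, on a subshift one can pull out of it a canonical \emph{sequence} of conditional measures whose limit is simultaneously the Tjur limit $\Po^y$ and a martingale limit; the latter interpretation is what delivers measurability of $y\mapsto\int f\,d\Po^y$ for free.

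Fix $f\in C(\Oo)$. Let $\bar\phi\in L^1(\Ot,\nu)$ be the function with $\bar\phi\circ\pi=\mathbb E_\mu[f\mid\pi^{-1}\mathcal B]$, $\mathcal B$ the Borel $\sigma$-algebra of $\Ot$ (Doob--Dynkin), so that $\int_{\pi^{-1}B}f\,d\mu=\int_B\bar\phi\,d\nu$ for every Borel $B\subseteq\Ot$. For $k\ge 1$ partition $\Ot$ into cylinders $[y_0^k]$: these are clopen, they shrink to $\{y\}$, and $\nu([y_0^k])>0$ for $\nu$-a.a.\ $y$. Writing $\mu^{[y_0^k]}=\mu(\cdot\mid\pi^{-1}[y_0^k])$, one has
\[
\int_\Oo f\,d\mu^{[y_0^k]}=\frac{1}{\nu([y_0^k])}\int_{[y_0^k]}\bar\phi\,d\nu=\mathbb E_\nu\bigl[\bar\phi\mid\mathcal F_k\bigr](y),
\]
where $\mathcal F_k=\sigma(y_0,\ldots,y_k)\uparrow\mathcal B$. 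By L\'evy's upward martingale convergence theorem the right-hand side converges to $\bar\phi(y)$ for $\nu$-a.a.\ $y$. On the other hand the pairs $([y_0^k],[y_0^k])$ lie in $D_y$ for $\nu$-a.a.\ $y$, so by the very definition of $\tjur_y$ every weak-$\ast$ subsequential limit of $\{\mu^{[y_0^k]}\}_k$ belongs to $\tjur_y$; at a Tjur point $\tjur_y=\{\Po^y\}$, and compactness of the space of probability measures on $\Oo$ then forces $\mu^{[y_0^k]}\to\Po^y$ weakly. Hence $\int_\Oo f\,d\Po^y=\bar\phi(y)$ for $\nu$-a.a.\ $y$. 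Since $\Po^y(\Oo_y)=1$, the map $y\mapsto\int_{\Oo_y}f\,d\Po^y$ agrees $\nu$-a.e.\ with the measurable, $\nu$-integrable function $\bar\phi$, and
\[
\int_\Ot\Bigl[\int_{\Oo_y}f\,d\Po^y\Bigr]\,d\nu=\int_\Ot\bar\phi\,d\nu=\int_\Oo f\,d\mu,
\]
which is the assertion for continuous $f$.

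To reach general $f$, let $\mathcal H$ be the set of bounded Borel $f:\Oo\to\R$ for which $y\mapsto\int f\,d\Po^y$ is $\nu$-measurable and the disintegration identity holds. Then $\mathcal H$ is a vector space containing the constants, it contains $C(\Oo)$ by the previous step, and it is closed under bounded monotone limits (apply monotone convergence both to $\int f\,d\Po^y$ for each fixed $y$ and to the outer integral over $\Ot$). As $C(\Oo)$ is a multiplicative family generating the Borel $\sigma$-algebra, the functional monotone class theorem gives that $\mathcal H$ contains all bounded Borel functions. Finally, for $f\in L^1(\Oo,\mu)$ with $f\ge 0$, apply the identity to $f_n:=f\wedge n\uparrow f$: the functions $y\mapsto\int f_n\,d\Po^y$ are measurable and increasing, and $\int_\Ot\int f_n\,d\Po^y\,d\nu=\int f_n\,d\mu\uparrow\int f\,d\mu<\infty$; hence the increasing limit $y\mapsto\int f\,d\Po^y$ is measurable and finite for $\nu$-a.a.\ $y$ (so $f\in L^1(\Oo_y,\Po^y)$ a.e.), the identity holds for $f\ge 0$ by monotone convergence, and then for arbitrary $f\in L^1(\Oo,\mu)$ by splitting $f=f^+-f^-$.

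The one genuinely delicate point is the measurability of $y\mapsto\int f\,d\Po^y$: the net $\net_y$ carries no distinguished sequence, so measurability cannot be read off from ``pointwise limit of measurable functions''. This is exactly what the martingale identification circumvents — the cylinder sequence $\{([y_0^k],[y_0^k])\}_k\subset D_y$ is an honest sequence of conditional measures, and its limit is pinned down both as the Tjur limit $\Po^y$ (using that $\tjur_y$ is a singleton at a Tjur point, and that $\tjur_y$ is defined through \emph{sequences}, which is precisely what makes a subsequential limit of the cylinder sequence land in $\tjur_y$) and as the Radon--Nikodym derivative $\bar\phi(y)$, which is manifestly measurable and $\nu$-integrable. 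Once continuous $f$ is disposed of, everything else is routine measure theory.
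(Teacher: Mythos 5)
Your proof is correct, but note that the paper does not prove this statement at all: it is imported verbatim as Theorem 5.1 of Tjur's 1975 work, so there is no internal argument to compare against. What you supply is a self-contained proof adapted to the symbolic setting, and it is a sound one: since $\Ot$ is a compact, zero-dimensional metric space, the cylinders $[y_0^k]$ give a countable clopen neighbourhood base at $y$, all of positive $\nu$-measure for $\nu$-a.a.\ $y$, so the net $\net_y$ contains the honest sequence $\mu(\cdot\mid\pi^{-1}[y_0^k])$; weak-$\ast$ compactness of probability measures on the compact space $\Oo$ plus $|\tjur_y|=1$ forces this sequence to converge to $\Po^y$, while L\'evy's upward theorem identifies $\int f\,d\Po^y$ with the conditional-expectation density $\bar\phi(y)$ for each continuous $f$, giving measurability (modulo $\nu$-null sets, i.e.\ for the completion, which is all the integrability claim requires) and the disintegration identity; the monotone class and truncation steps are routine, and your class $\mathcal H$ is also trivially closed under uniform limits if one uses a version of the functional monotone class theorem that demands it. This replaces Tjur's net-based argument, which is formulated for general topological spaces, by a sequential martingale argument that works precisely because of the countable clopen base; it buys a shorter, more probabilistic proof at the cost of generality, which is irrelevant here since the paper only applies the result to subshifts. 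Two small points worth making explicit if you write this up: the a.e.\ identification $\int f\,d\Po^y=\bar\phi(y)$ has an exceptional null set depending on $f$ (harmless, since all subsequent closure operations are countable), and for $f\in L^1$ one should fix a Borel representative and note that $\Po^y(N)=0$ for $\nu$-a.a.\ $y$ whenever $\mu(N)=0$, which follows from the identity applied to $\mathbbm{1}_N$.
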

The second theorem  provides the desired continuity for the map $ y \mapsto \Po^y$.
\begin{theorem}\label{thm:tjur2}{\cite[Theorem 4.1]{Tjur75}} Denote by $\Ot_0$ the set of all Tjur points in $\Ot$.
Then the map 
$$
 y \mapsto \Po^y
$$
is continuous on $\Ot_0$.
\end{theorem}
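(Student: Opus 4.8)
**The statement to prove: Theorem 4.1 of Tjur (\cite{Tjur75}) as quoted — the map $y \mapsto \mu^y$ is continuous on the set $\Sigma_0$ of all Tjur points.**

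The plan is to verify continuity directly from the defining property of Tjur points, namely that the net $\net_y$ converges. Fix a Tjur point $y \in \Ot_0$ and a sequence $y^{(k)} \to y$ with each $y^{(k)} \in \Ot_0$; I want to show $\mu^{y^{(k)}} \to \mu^y$ weakly. First I would fix a continuous test function $f \in C(\Oo)$ and $\varepsilon > 0$, and use the fact that $y$ is a Tjur point to pick a pair $(V,B) \in D_y$ with $\bigl| \mu^B(f) - \mu^y(f) \bigr| < \varepsilon$, where moreover $V$ can be taken to be a small ball. Since $y^{(k)} \to y$, for all large $k$ we have $y^{(k)} \in V$. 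Now here is the key point: because $V$ is open and $y^{(k)} \in V$, the pair $(V, B)$ — or a suitable $B' \subseteq V$ witnessing positive measure near $y^{(k)}$ — lies in $D_{y^{(k)}}$ as well, so that $\mu^B$ (equivalently, conditioning on the same neighbourhood $V$) is an element of the net $\net_{y^{(k)}}$ that is coarse relative to any finer element. Since $y^{(k)}$ is also a Tjur point, its net converges, so $\mu^{y^{(k)}}(f)$ is approximated by $\mu^{B''}(f)$ for $(V'', B'')$ sufficiently fine in $D_{y^{(k)}}$; and one can choose such refinements nested inside $V$. The role of the common neighbourhood $V$ is to pin both $\mu^y(f)$ and $\mu^{y^{(k)}}(f)$ within $O(\varepsilon)$ of quantities computed by conditioning on subsets of $V$, and then an elementary estimate comparing these conditional expectations closes the gap.

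More concretely, I would proceed in three steps. Step 1: translate ``$y$ is a Tjur point'' into an $\varepsilon$--$\delta$ statement — for every $f \in C(\Oo)$ and $\varepsilon>0$ there is $\delta>0$ such that for every $(V,B)\in D_y$ with $\mathrm{diam}(V) < \delta$ one has $|\mu^B(f) - \mu^y(f)| < \varepsilon$; this is just the definition of convergence of the net ordered by reverse inclusion of neighbourhoods. Step 2: given $y^{(k)} \to y$, for $k$ large enough that $y^{(k)}$ lies in a ball $V$ of radius $<\delta$ around $y$, observe that any $(W,B)\in D_{y^{(k)}}$ with $W\subseteq V$ is simultaneously an element of $D_y$ with $\mathrm{diam}(W)<\delta$; hence by Step 1 both $\mu^y(f)$ and (using that $y^{(k)}$ is a Tjur point) $\mu^{y^{(k)}}(f)$ are within $\varepsilon$ of $\mu^B(f)$ for such a common refining $B$. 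Step 3: conclude $|\mu^{y^{(k)}}(f) - \mu^y(f)| < 2\varepsilon$ for all large $k$, and since $f \in C(\Oo)$ and $\varepsilon>0$ were arbitrary, $\mu^{y^{(k)}} \to \mu^y$ weakly, i.e. the map is sequentially continuous; as $\Ot$ is metrizable this gives continuity on $\Ot_0$.

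The main obstacle is Step 2 — making precise why a neighbourhood $V$ of $y$ that happens to contain $y^{(k)}$ can be used ``from both sides.'' The subtlety is that $D_{y^{(k)}}$ requires its second coordinate $B$ to have positive $\Pt$-measure and to sit inside a neighbourhood of $y^{(k)}$, not of $y$; one must argue that refining inside $V \cap (\text{small ball around } y^{(k)})$ still produces sets of positive measure (otherwise $y^{(k)}$ could not be a Tjur point, since a Tjur point must have arbitrarily small neighbourhoods of positive measure). So the positivity constraint is automatically satisfied precisely because $y^{(k)} \in \Ot_0$. A secondary technical point is to ensure the approximation in Step 1 is genuinely uniform over all small-diameter pairs $(V,B)$ and not merely along one sequence realizing the limit — but this is immediate from the fact that the net $\net_y$, ordered by reverse inclusion, has a limit, so tails of the net (all pairs with small enough $V$) are uniformly close to $\mu^y$. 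Once these bookkeeping issues are handled, the argument is a routine $3\varepsilon$-type estimate.
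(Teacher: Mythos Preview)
The paper does not prove this theorem; it is quoted from Tjur \cite{Tjur75} and used as a black box. Your argument is essentially the standard one (and presumably close to Tjur's original): exploit that net-convergence of $\net_y$ means \emph{all} conditional measures $\mu^B$ with $B$ contained in a sufficiently small neighbourhood $V_0$ of $y$ are uniformly close to $\mu^y$, then use that for $y^{(k)}$ close to $y$ one can find a common $B$ lying in the tails of both nets.

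One small correction to Step 2: the claim ``any $(W,B)\in D_{y^{(k)}}$ with $W\subseteq V$ is simultaneously an element of $D_y$'' is false as written, since $W$ is a neighbourhood of $y^{(k)}$ and need not contain $y$. The fix is immediate: replace the pair $(W,B)$ by $(V,B)$. Since $B\subseteq W\subseteq V$, $\nu(B)>0$, and $V$ \emph{is} a neighbourhood of $y$, one has $(V,B)\in D_y$; and because the partial order depends only on the first coordinate, $(V,B)$ already lies in the tail determined by $V_0=V$. Equivalently, the clean formulation of Step 1 is: there exists an open neighbourhood $V_0\ni y$ such that $|\mu^B(f)-\mu^y(f)|<\varepsilon$ for \emph{every} measurable $B\subseteq V_0$ with $\nu(B)>0$ (no reference to a containing open set is needed once $V_0$ is fixed). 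With this phrasing Step 2 goes through without the slip, and your $2\varepsilon$ estimate in Step 3 is correct. Your remark that a Tjur point must lie in the support of $\nu$ (so that small neighbourhoods of $y^{(k)}$ have positive measure and such $B$ exist) is also correct and necessary; in the paper's setting $\nu$ is fully supported on $\Sigma$, so this is automatic.
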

As a corollary, we immediately conclude 
\begin{corollary}\label{mainTjur}
If $\Pt = \Po \circ \pi^{-1}$ and for all $y \in \Ot$ we have
$|\tjur_y| = 1$, i.e., 
all points are Tjur, then
$\mu$ admits a continuous disintegration, and hence $\nu$ is a $g$-measure.
\end{corollary}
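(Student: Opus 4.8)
The plan is to chain together the two theorems of Tjur recalled above with the second sufficient condition of Corollary \ref{pro:sufficientconditions}. First I would note that the hypothesis $|\tjur_y|=1$ for \emph{every} $y\in\Ot$ is far stronger than what Theorem \ref{thm:tjur1} needs: it says that all points of $\Ot$ are Tjur points, so a fortiori $\Pt$-almost every $y$ is. Hence Theorem \ref{thm:tjur1} applies and tells us that the family $\{\Po^y\}_{y\in\Ot}$, where $\Po^y$ denotes the unique accumulation point of the net $\net_y$, is measurable in $y$ and satisfies $\int_\Oo f\,d\Po = \int_\Ot\bigl(\int_{\Oo_y} f\,d\Po^y\bigr)\,d\Pt$ for every $f\in L^1(\Oo,\Po)$; that is, $\{\Po^y\}$ is a genuine disintegration of $\mu$ with respect to $\pi$ in the sense of the definition preceding Theorem \ref{thm:decomp}.

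Next I would invoke Theorem \ref{thm:tjur2}. Since by assumption the set $\Ot_0$ of Tjur points equals $\Ot$, that theorem yields continuity of $y\mapsto\Po^y$ on all of $\Ot$, the target being the space of Borel probability measures on the compact metrizable space $\Oo$ equipped with the weak topology. The only point to spell out is that weak continuity of $y\mapsto\Po^y$ is exactly the assertion that for each fixed $f\in C(\Oo)$ the real-valued map $y\mapsto\int_{\Oo_y} f\,d\Po^y$ is continuous, which is immediate because on probability measures over a compact metric space the weak topology is the initial topology generated by the functionals $\lambda\mapsto\int f\,d\lambda$, $f\in C(\Oo)$. Combining this with the previous paragraph, $\{\Po^y\}$ is a continuous measure disintegration of $\Po$ on the fibres $\{\Oo_y\}$; by the uniqueness remark in Section \ref{sec:mainresults} it is in fact \emph{the} continuous disintegration of $\mu$.

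Finally, having exhibited a CMD for $\Po$, I would apply condition (2) of Corollary \ref{pro:sufficientconditions}: existence of a continuous measure disintegration forces the function $\tilde g$ of \eqref{tildeg} to be continuous, since each relevant integrand $G^b(x)=\sum_{a\in\pi^{-1}b} p_a P_{a,x_0}/p_{x_0}$ depends continuously on $x$, and therefore $\Pt=\Po\circ\pi^{-1}$ is a $g$-measure. This completes the argument.

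I do not expect a real obstacle: the content is entirely contained in Tjur's Theorems \ref{thm:tjur1}--\ref{thm:tjur2} and in Corollary \ref{pro:sufficientconditions}, all of which may be assumed. The only care needed is the bookkeeping translation between Tjur's formulation ("the net $\net_y$ has a unique accumulation point and $y\mapsto\Po^y$ is weakly continuous") and the definition of CMD used here ("$y\mapsto\int_{\Oo_y} f\,d\Po^y$ is continuous for every $f\in C(\Oo)$"), together with the observation — automatic from essential uniqueness of disintegrations — that the $\Po^y$ produced for all $y$ by Theorem \ref{thm:tjur2} agree $\Pt$-a.e. with the disintegration produced by Theorem \ref{thm:tjur1}.
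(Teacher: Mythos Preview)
Your proposal is correct and follows exactly the route the paper intends: the corollary is stated in the paper with no separate proof, precisely because it is an immediate combination of Theorems \ref{thm:tjur1} and \ref{thm:tjur2} with condition (2) of Corollary \ref{pro:sufficientconditions}. Your write-up simply makes explicit the bookkeeping (weak continuity of $y\mapsto\Po^y$ versus continuity of $y\mapsto\int f\,d\Po^y$, and the $\Pt$-a.e.\ agreement of the two families) that the paper leaves implicit.
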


\subsection{Gibbs measures on fibres}
\label{sec:gibbsformalism}
The main result of the  previous section states that existence  of a unique   limit of the sequence of conditional measures $\mu( \cdot |\pi^{-1}B)$, $B\searrow y$, for all $y \in \Ot$, is sufficient for the regularity of $\Pt$. However, this condition is not easy to validate directly. The general principle  for renormalization of Gibbs random fields formulated by van Enter, Fernandez, and Sokal, in the seminal paper \cite{Enter93}, states that the conditional measures must be Gibbs for the \emph{original} potential. Since the original measure $\Po$ is Markov, i.e., Gibbs for a two--point interaction,
we have to study the Gibbs Markov measures on the fibres. In the setting of this paper that means that the conditional measures are Markov. In fact, we have already seen this indirectly in the Fan-Pollicott construction on non-homogeneous equilibrium states on fibres. In this section we define Gibbs Markov measures on fibres and show the absence of phase transitions, i.e., prove uniqueness  on each fibre. In the following section we show that any limit measure in $\tjur_y$ must be Markov and, given that there is only one Markov measure on each fibre, we conclude that $|\tjur_y|=1$ for all $y \in \Ot$.
Suppose $\Oo,\Ot$ and $\pi:\Oo\to\Ot$ are defined as above and $\Po$ is a stationary Markov measure with $\Oo$ as its support.
\begin{definition} A Borel probability measure $\rho$ on $\Oo_y$ is called Gibbs Markov for the (irreducible) stochastic matrix $P$, if for all $n$ and $\rho$-almost all $x=(x_0,x_1,\ldots)\in \Oo_y$ 
\begin{equation}\label{defspec}
\rho( x_0^n |x_{n+1}^\infty) =\frac {  Q(x_0,x_{1})\ldots Q(x_n,x_{n+1})}
 {\sum\limits_{\substack{a_0^n:\ a_0^nx_{n+1}^{+\infty}\in \Oo_y}}
 Q(a_0,a_{1})\ldots Q(a_n,x_{n+1})},\quad Q(a,a') =\frac {p_a P_{a,a'}} {p_{a'}}, \ a,a'\in\Ao.
\end{equation}
\end{definition}
If we define the  \emph{interaction}  $\Phi=\{\Phi_{\Lambda}(\cdot)\}$ -- collection of functions
indexed by 
finite subsets $\Lambda$ of $\Zp$, by 
$$
\Phi_\Lambda(x)=\begin{cases} -\log Q(x_k,x_{k+1}),& \text{ if } \Lambda=\{k,k+1\},\\
0,&\text{ otherwise},
\end{cases}
$$
then the expression  (\ref{defspec}) can be rewritten in a more traditional Gibbsian form:
\begin{equation}\label{specgamma}
\rho( x_0^n |x_{n+1}^\infty) = \frac 1{Z_{[0,n]}(x_{n+1}^\infty)} \exp\left( -H_{[0,n]}(x)\right),
\quad H_{[0,n]}(x)=\sum_{\Lambda\cap[0,n]\ne\varnothing} \Phi_\Lambda(x),
\end{equation}
and $Z_{[0,n]}(x_{n+1}^\infty)=\sum_{a_0^n: a_0^nx_{n+1}^\infty\in \Ot_y}\exp\left( -H_{[0,n]}(a_0^nx_{n+1}^\infty)\right)$ is the corresponding partition function. We denote by $\gibbs_{\Oo_y}(\Phi)$ the set of all Gibbs probability  measures for the interaction $\Phi$. Since $\Oo_y$ is a non-homogeneous subshift of finite type, i.e., the \emph{lattice system} as described in \cite{Ruelle78}, the standard theory of Gibbs states implies $\gibbs_{\Oo_y}(\Phi)$ is a non-empty convex set of measures.
Moreover, the extremal measures are tail-trivial. Thus two extremal measures in $\gibbs_{\Oo_y}(\Phi)$ are either singular or equal. 

To prove uniqueness of Gibbs Markov measures on fibres we 
will use the classical  boundary uniformity  condition \cites{Fernandez05,Georgii74}.
Denote the right hand side of (\ref{specgamma}) by $\gamma_{[0,n]}(x_0^n|x_{n+1}^\infty)$, and for a continuous function $f$, let
$$
(\gamma_{[0,n]} f)(x) = \sum_{a_0^n: a_0^nx_{n+1}^\infty\in\Oo_y} f(a_0^nx_{n+1}^\infty)
\gamma_{[0,n]}(a_0^n|x_{n+1}^\infty).
$$
Then 
$\rho\in \gibbs_{\Oo_y}(\Phi)$ if and only if for every continuous $f$ on $\Oo_y$ the Dobrushin-Lanford-Ruelle equations
are valid for every $n\ge 0$
$$
\int_{\Oo_y} f(x) \rho(dx)= \int_{\Oo_y} (\gamma_{[0,n]} f)(x)\rho(dx).
$$
Given the fact that the non-homogeneous subshift of finite type $\Oo_y$ is transitive, 
$\Phi$ is a \emph{finite range} potential, it is easy to check that 
the family of probability kernels $\gamma_{[0,n]}(\cdot |x_{n+1}^\infty)$ satisfies the so-called boundary uniformity condition: there exists $c>0$ such that  for any $a_0^m\in\pi^{-1}(y_0^m)$, 
and every $x,\tilde x\in\Oo_y$, for all sufficiently large $n$, one has
\begin{equation}\label{BBC}
\bigl(\gamma_{[0,n]}\mathbbm 1_{[a_0^m]}\bigr)(x)\ge c\bigl(\gamma_{[0,n]}\mathbbm 1_{[a_0^m]}\bigr)(\tilde x).
\end{equation}
Applying  standard arguments for uniqueness of Gibbs measures under the boundary uniformity condition \cite{Fernandez05} one gets:
\begin{lemma}\label{lemmaUniq} Suppose $\Oo_y$ is a transitive non-homogeneous subshift of finite type, and the potential $\Phi$ is such that the family of probability kernels $\{ \gamma_{[0,n]}\}$ satisfies \eqref{BBC}. Then there
exists a unique Gibbs measure for $\Phi$ on $\Oo_y$, i.e., $|\gibbs_{\Oo_y}(\Phi)|=1$.
\end{lemma}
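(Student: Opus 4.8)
\emph{Proof plan.} The plan is to invoke the classical boundary-uniformity criterion for uniqueness of Gibbs measures \cite{Fernandez05,Georgii74}, now that the bound \eqref{BBC} is available. Recall from the discussion preceding the lemma that $\gibbs_{\Oo_y}(\Phi)$ is a non-empty convex set whose extremal elements are tail-trivial, so that two distinct extremal Gibbs measures are mutually singular. Hence it suffices to show that any two Gibbs measures $\rho_1,\rho_2\in\gibbs_{\Oo_y}(\Phi)$ are comparable, namely $c\,\rho_2\le\rho_1\le c^{-1}\rho_2$ with $c$ the constant in \eqref{BBC}; this rules out mutual singularity and forces all extremal measures to coincide.

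First I would fix a cylinder $[a_0^m]$ in $\Oo_y$ and choose $n$ large enough that \eqref{BBC} holds for $\mathbbm 1_{[a_0^m]}$. Using the DLR equation $\rho_i([a_0^m]) = \int_{\Oo_y}(\gamma_{[0,n]}\mathbbm 1_{[a_0^m]})(x)\,\rho_i(dx)$ for $i=1,2$, I would integrate the pointwise inequality $(\gamma_{[0,n]}\mathbbm 1_{[a_0^m]})(x)\ge c\,(\gamma_{[0,n]}\mathbbm 1_{[a_0^m]})(\tilde x)$ first in the variable $x$ against $\rho_1$ with $\tilde x$ held fixed, obtaining $\rho_1([a_0^m])\ge c\,(\gamma_{[0,n]}\mathbbm 1_{[a_0^m]})(\tilde x)$ for every $\tilde x\in\Oo_y$, and then in $\tilde x$ against $\rho_2$, obtaining $\rho_1([a_0^m])\ge c\,\rho_2([a_0^m])$; the reverse bound is symmetric. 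Since the cylinders $[a_0^m]$ form a countable clopen basis of $\Oo_y$ and every open set is a countable disjoint union of such cylinders, this inequality extends to all open sets, and then by outer regularity of Borel probability measures on the compact metrizable space $\Oo_y$ to all Borel sets, giving $c\,\rho_2\le\rho_1\le c^{-1}\rho_2$.

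It then follows that no two Gibbs measures for $\Phi$ on $\Oo_y$ are mutually singular. Applying this to a pair of extremal elements of $\gibbs_{\Oo_y}(\Phi)$ and using their tail-triviality, all extremal Gibbs measures are equal; since every Gibbs measure is a barycenter of extremal ones, $\gibbs_{\Oo_y}(\Phi)$ is a singleton, which is the assertion. I expect no serious obstacle in this last step: the genuinely substantive input is the boundary-uniformity estimate \eqref{BBC}, already derived from the finite range of $\Phi$ and the transitivity of $\Oo_y$, and the only point needing a little care is passing from the anchored cylinders appearing in \eqref{BBC} to arbitrary Borel sets, which the regularity step above handles.
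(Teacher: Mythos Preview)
Your proposal is correct and follows essentially the same route as the paper: integrate the boundary-uniformity inequality \eqref{BBC} twice (once against each Gibbs measure) to obtain $\rho_1([a_0^m])\ge c\,\rho_2([a_0^m])$ on cylinders, deduce $c\,\rho_2\le\rho_1\le c^{-1}\rho_2$, and then use that distinct extremal Gibbs measures are mutually singular to conclude uniqueness. The only differences are cosmetic --- the paper works directly with a pair of extremal measures and omits the explicit passage from cylinders to Borel sets that you spell out.
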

\begin{proof}
Consider two arbitrary extremal Gibbs measures $\rho,\tilde\rho\in \gibbs_{\Oo_y}(\Phi)$.
By integrating (\ref{BBC}) first with respect to $\rho(dx)$, and then with respect to $\tilde\rho(d\tilde x)$, one concludes that
\[
\begin{aligned}
\rho([a_0^m]) & = \iint \bigl(\gamma_{[0,n]}\mathbbm 1_{[a_0^m]}\bigr)(x) \rho(dx)\tilde\rho(d\tilde x)  \geq \iint c  \bigl(\gamma_{[0,n]}\mathbbm 1_{[a_0^m]}\bigr)(\tilde x) \tilde\rho(d\tilde x) \rho(dx) = c \tilde\rho([a_0^m]),
\end{aligned}
\]
and hence, $\rho\ge c\tilde \rho$. Similarly, $\tilde\rho\ge c\rho$. Since the distinct extremal measures in $\gibbs_{\Oo_y}(\Phi)$ must be singular, we conclude that $\rho=\tilde\rho$. Hence, $
\gibbs_{\Oo_y}(\Phi)$ has a unique extremal element, and therefore $
\gibbs_{\Oo_y}(\Phi)$ is a singleton.
\end{proof}

\subsection{Conditional measures are Markov}
\label{sec:conditionalmeasuresaremarkov}
We are now going to show that any limit point of the net $\net_y$ must be a Gibbs Markov measure on $\Oo_y$, i.e., $\tjur \subseteq \gibbs_{\Oo_y}(\Phi)$. Since we have already shown that $|\gibbs_{\Oo_y}(\Phi)|=1$ for all $y \in \Ot$, we conclude that $|\tjur_y|=1$ for all $y$, i.e., all points in $\Pt$ are Tjur, and hence the $\Pt$ is a $g$-measure.
\begin{proposition} Let $\Po$ be a stationary irreducible Markov measure for the interaction $\Phi = \{ \phi_{i,i+1}\}$ and let $\tjur_y$ be defined as above. For all $y\in\Ot$, one has $\tjur_y\subseteq \gibbs_{\Oo_y}(\Phi)$.
\end{proposition}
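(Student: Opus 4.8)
The plan is to show that any weak accumulation point $\lambda_y \in \tjur_y$ satisfies the Dobrushin--Lanford--Ruelle equations for the interaction $\Phi$ on the non-homogeneous SFT $\Oo_y$, which by definition means $\lambda_y \in \gibbs_{\Oo_y}(\Phi)$. The starting observation is that the original Markov measure $\Po$ is itself Gibbs for $\Phi$ on $\Oo$, so its conditional probabilities on any finite window $[0,n]$ are given by the specification kernels $\gamma_{[0,n]}(\cdot\,|\,x_{n+1}^\infty)$ (up to the global support constraint coming from the SFT). Hence each conditioned measure $\Po^{B} = \Po(\cdot\,|\,\pi^{-1}B)$, for $(V,B)\in D_y$, inherits this property: conditioning on $\pi^{-1}B$ only restricts the configuration outside a fixed coordinate window, so the DLR equation for a fixed $[0,m]$ continues to hold for $\Po^B$ \emph{provided} that $B$ is small enough that the symbols $y_0,\ldots,y_m$ are already determined, which is the case once $V$ is a sufficiently small neighbourhood of $y$.

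First I would fix a finite set $\Lambda = [0,m]$ and a continuous function $f$ on $\Oo$, and write down the DLR identity $\int (\gamma_{[0,m]} f)(x)\,\Po^B(dx) = \int f(x)\,\Po^B(dx)$; this requires checking that for $x \in \pi^{-1}B$ the sum defining $(\gamma_{[0,m]}f)(x)$ only ranges over $a_0^m$ with $\pi(a_0^m) = y_0^m$, which holds once $B \subset V$ and $V$ is contained in the cylinder $[y_0^m]$. Then, along a sequence $(V_n,B_n)$ realising a given $\lambda_y \in \tjur_y$ (so $V_n \searrow \{y\}$ and $\Po^{B_n} \to \lambda_y$ weakly), I would pass to the limit on both sides. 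The left side needs the integrand $x \mapsto (\gamma_{[0,m]}f)(x)$ to be continuous on $\Oo_y$ — which it is, being a finite sum of products of the continuous function $Q$ composed with shifts, divided by a partition function that is bounded away from zero because $\Oo_y$ is transitive (finite positive sum of positive terms). Since weak convergence of measures against a fixed continuous bounded function gives convergence of integrals, and since eventually all $\Po^{B_n}$ are supported inside a neighbourhood of $\Oo_y$ where the relevant functions are (uniformly) close to their restriction to $\Oo_y$, both sides converge to the corresponding integrals against $\lambda_y$. This yields $\int (\gamma_{[0,m]}f)\,d\lambda_y = \int f\,d\lambda_y$ for all $m$ and all continuous $f$, i.e. $\lambda_y \in \gibbs_{\Oo_y}(\Phi)$.

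The main obstacle I expect is a topological bookkeeping issue rather than a deep one: the functions $\gamma_{[0,m]}$ and the partition function $Z_{[0,m]}$ are \emph{a priori} only defined on the fibre $\Oo_y$, whereas the measures $\Po^{B_n}$ live on (shrinking neighbourhoods of) $\Oo_y$ inside $\Oo$, not on $\Oo_y$ itself, so one must either extend these functions continuously to a neighbourhood of $\Oo_y$ in $\Oo$ and control the error, or exploit that for $n$ large the support of $\Po^{B_n}$ lies in cylinders $[y_0^{k_n}]$ with $k_n \to \infty$, so that the restriction of the ambient specification to that cylinder coincides with the fibre specification on the window $[0,m]$. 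Making the approximation step clean — showing that the discrepancy between the ambient conditional probabilities on $[y_0^{k_n}]$ and the fibre kernel $\gamma_{[0,m]}$ vanishes as $k_n \to \infty$ — is where the (routine) work lies; it rests on the finite range of $\Phi$ and the boundedness of the transitivity index, both already available. Once that is in place, combining with Lemma~\ref{lemmaUniq} ($|\gibbs_{\Oo_y}(\Phi)| = 1$) forces $\tjur_y$ to be a singleton, so every $y$ is a Tjur point and Corollary~\ref{mainTjur} gives that $\Pt$ is a $g$-measure.
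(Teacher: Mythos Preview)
Your approach is correct and follows the same underlying idea as the paper's: show that the pre-limit conditioned measures $\Po^{B}$ already satisfy the fibre Markov/Gibbs property on any fixed window $[0,m]$ once $B\subset[y_0^m]$, and then pass this identity through the weak limit. The execution differs in one respect worth noting. The paper first proves an auxiliary lemma (Lemma~\ref{TjurLimits}) reducing the net $\{\Po^{B}\}$ to the subnet indexed by \emph{cylinder} sets $B=[y_0^m z_{(m)}]$, and then carries out an explicit bare-hands computation of the ratio $\rho_m(x_0^n\,|\,x_{n+1}^{n+\ell})$ for those cylinders, showing it equals the fibre specification independently of $m$ and $\ell$. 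You bypass the cylinder reduction entirely by observing that for \emph{any} measurable $B\subset[y_0^m]$ the indicator $\mathbbm 1_{\pi^{-1}B}$ factors as a constraint $\{\pi(x_0^m)=y_0^m\}$ times a $\sigma(x_{m+1}^\infty)$-measurable event, so the fibre DLR equation on $[0,m]$ holds for $\Po^B$ directly; then you invoke continuity of $x\mapsto(\gamma_{[0,m]}f)(x)$ on the clopen set $\pi^{-1}[y_0^m]$ to push the identity to the limit. This is a modest but genuine streamlining: it eliminates Lemma~\ref{TjurLimits} and the convex-combination bookkeeping. Two small remarks: the positivity of the partition function $Z_{[0,m]}(x_{m+1})$ does not require transitivity of $\Oo_y$ (the term $a_0^m=x_0^m$ already makes it positive, and it takes only finitely many values), so your invocation of transitivity there is harmless but unnecessary; and your final paragraph about combining with Lemma~\ref{lemmaUniq} goes beyond the Proposition itself into the subsequent Corollary, which is fine but should be kept separate.
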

\begin{proof} Suppose $\rho\in \tjur_y$:
$$
\rho=\lim_{m \to \infty} \Po(\cdot |\pi^{-1} B_m),
$$
for some sequence $(V_{m},B_{m})\in D_y$. Without loss of generality we may assume $V_m=[y_0^m]$. Moreover, since any measurable set $B_m$ can be approximated arbitrarily well by cylinders, it is sufficient to consider only limit points of $\{\Po(\cdot|\pi^{-1}[y_{0}^{m}z_{m+1}^{m+n}])\}_{m,n\ge 0}$, provided $\Pt([ y_{0}^{m}z_{m+1}^{m+n}])>0$.
Denote the set of all limits points of such conditional measures by $\cyltjur_y$. We first prove the following lemma:
\begin{lemma}\label{TjurLimits}
For all $y \in \Ot$, any limit point in $\tjur_y$ is a linear combination of the limit points
in $\cyltjur_y$. 
\end{lemma}
\begin{proof}
Let $y \in \Ot$, $\lambda \in \tjur_y$ and $(B_m,V_m) \in D_y$ is a sequence such that $\Po^{B_m} \to \lambda$. 
It suffices to show that each $\Po^{B_m}$ is a limit point of linear combinations in $\cyltjur_y$.
For any $m,n \in \N$ we can find a collection $\{C^{(m)}_{n,l}\}$ of disjoint cylinder sets in $\Ot$, indexed by a finite set $L_{m,n}$, such that $\Pt\bigl(B_m \Delta \cup_{l \in L_{m,n}} C^{(m)}_{n,l}\bigr) <2^{-n}\Pt(B_m)$. 

Given any measurable set $A$, we have that $\left|\Po^{B_m}(A) - \Po^{\cup_{l \in L_{m,n}} C^{(m)}_{n,l}}(A)\right| \to 0$ as $n \to \infty$. Also note that
\[
\begin{aligned}
\Po^{\cup_{l \in L_{m,n}} C^{(m)}_{n,l}}(A) & = \frac{\Po(A \cap \pi^{-1} \cup_{l \in L_m} C^{(m)}_{n,l})}{\Po( \pi^{-1} \cup_{l \in L_m} C^{(m)}_{n,l})}  = \sum_{l \in L_{m,n}} \Po(A| \pi^{-1} C^{(m)}_{n,l}) \frac{ \Po( \pi^{-1} C^{(m)}_{n,l})}{\sum_{\tilde{l} \in L_{m,n}}\Po( \pi^{-1} C^{(m)}_{n,\tilde{l}})}.
\end{aligned}
\]
In other words, each $\Po^{B_m}$ is a limit point of linear combinations of measures of the form $\Po^{C_{n,l}^{(m)}}$. Therefore $\lambda$ is a limit point of linear combinations of measures in $\cyltjur_y$. 
\end{proof}
Hence, if we are able to prove that $\cyltjur_y \subseteq \gibbs_{\Oo_y}(\Phi)$, then
we are able to conclude that $\tjur_y \subseteq \gibbs_{\Oo_y}(\Phi)$ as well. 
Suppose 
$$
\rho=\lim_{m} \rho_m,\quad \rho_m=\mu(\cdot |\pi^{-1} [y_{0}^{m}z_{(m)}]),
$$
where $z_{(m)}$ is some finite word in alphabet $\At$, such that $\Pt([y_{0}^{m}z_{(m)}])>0$ for all $m$.
 We are going to show that $\rho$ is a Markov measure on $\Oo_y$, in other words
\begin{equation}\label{Markovrho}
\rho(x_0^n|x_{n+1}^{n+\ell}) = \rho(x_0^n|x_{n+1}) 
\end{equation}
for all $n\ge 0$, $\ell\ge 1$, and $x\in\Oo_y$. Since $\rho$ is the weak limit of $\rho_m$'s, it is thus
sufficient to establish
(\ref{Markovrho}) for $\rho_m$ for all sufficiently large $m$.

Consider $x\in\Oo_y$, fix $n\ge 0$, $\ell\ge 1$.  Choose $m_0$ such that  for all $m\ge m_0$, $K_m$ -- the length of the word $y_{0}^{m}z_{(m)}$, satisfies $K_m> n+\ell$; e.g., $m_0=n+\ell+1$ suffices. Then
\[
\begin{aligned}
\rho_m(x_0^n|x_{n+1}^{n+\ell})& = 
\frac{ \rho_m([x_0^n,x_{n+1}^{n+\ell} ])}
{\rho_m([x_{n+1}^{n+\ell}])}
=\frac{ \Po([x_0^n,x_{n+1}^{n+\ell}]\cap\pi^{-1}[y_0^mz_{(m)} ])}
{\Po([x_{n+1}^{n+\ell}]\cap\pi^{-1}[y_0^mz_{(m)} ])}
=\frac{ \sum\limits_{a_0^{K_m}\in \pi^{-1}[y_0^mz_{(m)} ]:  \ a_0^{n+\ell}=x_0^{n+\ell}} \Po(a_0^{K_m})}{\sum\limits_{b_0^{K_m}\in \pi^{-1}[y_0^mz_{(m)} ]:  \ b_{n+1}^{n+\ell}=x_{n+1}^{n+\ell}} \Po(b_0^{K_m})}\\
& = \frac{ \sum\limits_{a_0^{K_m}\in \pi^{-1}[y_0^mz_{(m)} ]:  \ a_0^{n+\ell}=x_0^{n+\ell}} \Po(a_0^{n}|a_{n+1}^{K_m}) \Po(a_{n+1}^{K_m})}{\sum\limits_{b_0^{K_m}\in \pi^{-1}[y_0^mz_{(m)} ]:  \  b_{n+1}^{n+\ell}=x_{n+1}^{n+\ell}} \Po(b_0^{n}|b_{n+1}^{K_m})
\Po(b_{n+1}^{K_m})}\\
& = \frac{ \sum\limits_{a_0^{K_m}\in \pi^{-1}[y_0^mz_{(m)} ]:  \ a_0^{n+\ell}=x_0^{n+\ell}} 
\Po( x_0^{n}|  x_{n+1}) \Po(a_{n+1}^{K_m})}{\sum\limits_{b_0^{K_m}\in \pi^{-1}[y_0^mz_{(m)} ]:  
\  b_{n+1}^{n+\ell}=x_{n+1}^{n+\ell}} \Po(b_0^{n}| x_{n+1})
\Po(b_{n+1}^{K_m})}\quad (\text{since $\mu$ is Markov})\\
& = 
\frac{ \Po( x_0^{n}|  x_{n+1}) \sum\limits_{a_0^{K_m}\in \pi^{-1}[y_0^mz_{(m)} ]:  \ a_0^{n+\ell}=x_0^{n+\ell}} 
 \Po(a_{n+1}^{K_m})
 }
 {
  \sum\limits_{b_0^{n} : \pi(b_0^n x_{n+1}) = y_0^{n+1},  
\  P_{b_{n} x_{n+1}}>0}
\Po(b_0^{n}| x_{n+1}) 
  \sum\limits_{b_{n+1}^{K_m}\in \pi^{-1}[y_{n+1}^mz_{(m)} ]:  
\  b^{n+\ell}_{n+1}=x^{n+\ell}_{n+1}} 
\Po(b_{n+1}^{K_m})
}
\\
& = \frac{\Po(x_0^n|x_{n+1})}{\sum\limits_{b_0^n : \pi(b_0^n x_{n+1}) = y_0^{n+1},  
\  P_{b_{n} x_{n+1}}>0} \Po(b_0^n|x_{n+1})},
\end{aligned}
\]
is independent of $m$ and of $x_{n+2}^{n+\ell}$. Hence, $\rho$, which is the weak limit of $\rho_m$'s
satisfies (\ref{Markovrho}), and is thus a Markov measure on $\Oo_y$.

\end{proof}

These results can now be used to show that fibre mixing does indeed imply existence of a continuous measure disintegration, and hence by Corollary \ref{mainTjur} regularity of the factor measure $\Pt$.
\begin{corollary}
Let $\Oo \subset \Ao^\Zp$ and $\Ot \subset \At^\Zp$ be mixing subshifts of finite type, and $\pi:\Oo \to \Ot$ a $1$-block factor map which is \mybf{fibre mixing}.
Suppose  $\Po$ is the stationary Markov measure consistent with $\Oo$, then $\Po$ admits a continuous measure disintegration and hence $\Pt=\Po\circ\pi^{-1}$ is a $g$-measure.
\end{corollary}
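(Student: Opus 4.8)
The plan is to chain the results of Sections~\ref{sec:tjurformalism}--\ref{sec:conditionalmeasuresaremarkov}: fibre mixing makes every fibre a transitive non-homogeneous SFT, which forces a unique Gibbs Markov measure on each fibre, which makes every point of $\Ot$ a Tjur point, whence Corollary~\ref{mainTjur} yields a continuous measure disintegration of $\mu$ and the $g$-property of $\nu$. First I would invoke Corollary~\ref{lem:lemma_mixing_primitive}: since $\Oo$, $\Ot$ are mixing SFTs and $\pi$ is a fibre-mixing $1$-block factor, for every $y\in\Ot$ the fibre $\Oo_y=\Oo_{\mathbb M^y}$ is a transitive non-homogeneous SFT, and, by the sub-positivity reformulation of Lemma~\ref{lemmayoo}, with transitivity index bounded by a single integer $k$ independent of $y$. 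I would also record the routine facts that the stationary Markov measure $\mu$ compatible with $\Oo$ is exactly a Gibbs measure for the two-point, hence finite-range, interaction $\Phi$ with $\Phi_{\{j,j+1\}}(x)=-\log Q(x_j,x_{j+1})$, $Q(a,a')=p_aP_{a,a'}/p_{a'}$ (finite along allowed paths because $P_{ij}>0\iff M_{ij}=1$), and that, restricted to $\Oo_y$, the Gibbs measures for $\Phi$ are precisely the Gibbs Markov measures of the Definition in Section~\ref{sec:gibbsformalism}, directly from the definition of $\mathbb M^y$.

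The substantive step is to verify that on each $\Oo_y$ the DLR kernels $\gamma_{[0,n]}(\cdot\,|\,x_{n+1}^\infty)$ satisfy the boundary uniformity condition~\eqref{BBC}. Here transitivity with bounded index does the work: given a target cylinder $[a_0^m]$ in $\Oo_y$ and two boundary conditions $x,\tilde x\in\Oo_y$, one can connect $a_0^m$ to the tail prescribed by either boundary condition by a bridge of length at most $k$, whose Gibbsian weight is bounded below by a constant depending only on $k$ and $\min\{Q(a,a'):P_{a,a'}>0\}>0$; comparing the two resulting lower bounds for $(\gamma_{[0,n]}\mathbbm 1_{[a_0^m]})(x)$ and $(\gamma_{[0,n]}\mathbbm 1_{[a_0^m]})(\tilde x)$ gives~\eqref{BBC} with a uniform $c>0$. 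By Lemma~\ref{lemmaUniq}, $|\gibbs_{\Oo_y}(\Phi)|=1$ for every $y\in\Ot$.

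To conclude, the Proposition of Section~\ref{sec:conditionalmeasuresaremarkov} gives $\tjur_y\subseteq\gibbs_{\Oo_y}(\Phi)$, while $\tjur_y\neq\varnothing$ by compactness of the space of Borel probability measures on $\Oo$; combined with the uniqueness just proved, $|\tjur_y|=1$ for every $y$, i.e.\ every point of $\Ot$ is a Tjur point. Corollary~\ref{mainTjur} (packaging Tjur's Theorems~\ref{thm:tjur1} and~\ref{thm:tjur2} together with the second sufficient condition of Corollary~\ref{pro:sufficientconditions}) then produces a continuous measure disintegration $\{\mu^y\}$ of $\mu$ and hence shows that $\nu=\mu\circ\pi^{-1}$ is a $g$-measure. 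I expect the boundary-uniformity verification~\eqref{BBC} to be the only genuine obstacle — turning ``bounded transitivity index plus finite range'' into an explicit uniform lower bound; every other step is a direct appeal to a result established above.
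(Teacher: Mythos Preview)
Your proposal is correct and follows essentially the same route as the paper: chain Corollary~\ref{lem:lemma_mixing_primitive} (fibres are transitive), the boundary-uniformity check and Lemma~\ref{lemmaUniq} (uniqueness of the Gibbs Markov measure on each fibre), the Proposition of Section~\ref{sec:conditionalmeasuresaremarkov} ($\tjur_y\subseteq\gibbs_{\Oo_y}(\Phi)$) together with $\tjur_y\ne\varnothing$, and finally Corollary~\ref{mainTjur}. The only difference is presentational: the paper treats the verification of~\eqref{BBC} as already done in the text preceding Lemma~\ref{lemmaUniq} (``it is easy to check''), so its proof of the Corollary is a three-line appeal to Lemma~\ref{lemmaUniq}, the Proposition, and Corollary~\ref{mainTjur}, whereas you spell out that step explicitly.
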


\begin{proof} By Lemma \ref{lemmaUniq}, for every $y\in\Ot$, there is a unique Gibbs Markov measure  on $\Oo_y$: $|\gibbs_{\Oo_y}(\Phi)|=1$. By Proposition \ref{TjurLimits}, $\varnothing\ne \tjur_y\subset \gibbs_{\Oo_y}(\Phi)$, and hence $|\tjur_y|=1$ for all $y\in\Ot$. Thus all points in $\Ot$ are Tjur,
and hence by Corollary \ref{mainTjur}, $\mu$ admits a continuous disintegration, which allows
us to conclude that $\nu$ is a $g$-measure.
\end{proof}

\section{Examples}
\label{sec:examples}
Existence of a continuous measure disintegrations of Markov measures
thus follows  from the fibre-mixing condition. In fact, it is a weaker condition:
it implies regularity of the Furstenberg example (see Section \ref{sec:furstenberg}) for the exceptional parameter value $p=\frac{1}{2}$, which is not fibre mixing. Recall, $\{X_n\}_{n \in \Zp}$ is a Bernoulli process, with a parameter $p \in (0,1)$ taking values in $\Ao=\{-1,1\}$ and $\{Y_n\}_{n\in\Zp}$ is defined by $Y_n=X_{n}X_{n+1}$. The fibres in this example are $\Oo_y = \pi^{-1}(y)=\left\{ x_y^+, x_y^- \right\}$, where
\[
\begin{aligned} 
x_y^+ &= (1,\ y_0,\ y_0\cdot y_1,\ y_0\cdot y_1\cdot y_2,...),\quad
x_y^- &= (-1,\ -y_0,\ - y_0\cdot y_1, \ -y_0\cdot y_1\cdot y_2,\ldots).
\end{aligned}
\]

If $p = \frac{1}{2}$, then $\{Y_n\}$ are independent, and $\Pt=\Po\circ\pi^{-1}$ is the  Bernoulli
measure . We now show that $\{ \Po_y \}_{y\in \Ot}$ defined by
\[
\Po_y = \frac{1}{2}\left( \delta_{x_y^+} + \delta_{x_y^-} \right).
\]
is a continuous measure disintegration of $\Po$.   It is clear that, given $y$, the measure $\Po_y$ is a Borel measure supported on $\Oo_y$. Moreover, one has
\[
\Po_y(f)-\Po_{\tilde{y}}(f)  = \frac{1}{2}\left( f(x_y^+) + f(x_y^-)- f(x_{\tilde{y}}^+) - f(x_{\tilde{y}}^-) \right) 
\]
and since $y \to x^+_{y}$ and $y \to x^-_{y}$ are a continuous maps, for any continuous function
$f$ and any $\epsilon > 0$, one can choose $\delta > 0$, such that $d(y,\tilde{y}) < \delta$ implies $|\Po_y(f)-\Po_{\tilde{y}}(f)| < \epsilon$.

We now show that $\{\Po_y\}$ is indeed a disintegration of $\Po$.  
For $x=(x_i)_{i\ge 0}$, let $\bar{x}=(\bar{x}_i)$ with $\bar{x}_i = - x_i$ for all $i\ge 0$; note that $x_y^{-}=\overline{x_y^+}$.
It is sufficient to  validate consistency of disintegration $\{\Pt_y\}$ for indicators of cylindric sets:
\[
\begin{aligned}
\int_\Ot \int_{\Oo_y} &\mathbbm{1}_{[a_0^{n}]}(x) \Po_y(dx) \Pt(dy)  = 
\frac{1}{2}\int_\Ot \Bigl(\mathbbm{1}_{[a_0^{n}]}(x_y^+)+\mathbbm{1}_{[a_0^{n}]}(x_y^-)\Bigr)  \Pt(dy)\\
& = 
\frac{1}{2}\int_{\Oo} 
\Bigl(\mathbbm{1}_{[a_0^{n}]} (x_{\pi(\tilde{x})}^+)+\mathbbm{1}_{[\bar{a}_0^{n}]} (x_{\pi(\tilde{x})}^+ ) 
\Bigl) \Po(d\tilde{x}) 
 = \frac{1}{2}\int_\Oo \mathbbm{1}_{[a_0^{n}] \cup [\bar{a}_0^{n}]}\left(\tilde{x}\right)  \Po(d\tilde{x})  = \int_\Oo \mathbbm{1}_{[a_0^{n}]}(\tilde{x}) \Po(d\tilde{x}).
\end{aligned}
\] 
Hence the $\Po$  admits for a continuous  disintegration. This example only works for a very specific parameter value $p=1/2$. Interestingly, there exists another example that has exactly the same continuous measure disintegration. Let $p \in (0,1)$ and $\left\{ X_n \right\}_{n\in \Zp}$ be a Markov chain taking values in $\{-1,1\}$, with the transition probability matrix 
\[
P = \left( \begin{array}{cc} p & 1-p \\ 1-p & p \end{array} \right).
\]
The stationary distribution is the distribution $\rho = \left(\frac{1}{2}, \frac{1}{2}\right)$. Then the factor process 
\[
Y_n = \pi\left(X_n, X_{n+1}\right) = X_n \cdot X_{n+1}.
\]
is Bernoulli for all values of $p \in (0,1)$. Let $M_+(w_n^m) = \sum\limits_{i=n}^{m}\mathbbm{1}_{+1}(w_i)$ and $M_-(w_n^m) = \sum\limits_{i=n}^{m}\mathbbm{1}_{-1}(w_i)$, then
\[
\begin{aligned}
\Pt(Y_0 = y_0|Y_1^n = y_1^n) & = \frac{\Pt(Y_0^n = y_0^n)}{\sum\limits_{w \in \{-1,1\}}\Pt(Y_0^n = w y_1^n)} \\
& = \frac{\Po(X_0^{n+1} = (x_y^+){}_0^{n+1})+\Po(X_0^{n+1} = (x_y^-){}_0^{n+1})}{\sum\limits_{w \in \{-1,1\}} \Po(X_0^{n+1} = (x_{w y_1^\infty}^+){}_0^{n+1})+\Po(X_0^{n+1} = (x_{w y_1^\infty}^-){}_0^{n+1})} \\
& = \frac{2 p^{M_+(y_0^n)}(1-p)^{M_-(y_0^n)}}{2 p^{M_+(y_0^n)}(1-p)^{M_-(y_0^n)}+2 p^{M_+(\bar{y}_0y_1^n)}(1-p)^{M_-(\bar{y}_0y_1^n)}} \\
& = \left\{ \begin{array}{cc} p &: y_0 = +1 \\ 1-p &: y_0 = -1, \end{array}\right.
\end{aligned}
\]
for any $n \geq 1$, where we again used the notation $\bar{y}_0 = -y_0$. It follows that the process $\{ Y_n \}_{n \in \Zp}$ is Bernoulli  with the parameter $p$. 
Note that this example has exactly the same fibre structure as the last example: $\Oo_y = \pi^{-1}(y)=\left\{ x_y^+, x_y^- \right\}$, where
\[
x_y^+ = (1,y_0,y_0\cdot y_1,...), \quad x_y^- = (-1,-y_0,- y_0\cdot y_1,...).
\]
Moreover, the same continuous measure disintegration exists: $\{ \Po_y \}_{y \in \Ot}$ with
\[
\Po_y = \frac{1}{2}\left( \delta_{x_y^+} + \delta_{x_y^-} \right).
\]
Continuity and consistency follow by an identical computation as for the Furstenberg example above. Therefore, we have another example of a factor measure with a continuous measure disintegration, but 
without fibre mixing conditions. 

\subsection{Markov factor without continuous measure disintegration} We now show by an example that existence of a continuous measure disintegration is not  necessary. In this example, the factor measure $\Pt$ is Markov. 
Let $\{ X_n \}_{n \in \Zp}$ be a stationary Markov chain taking values in $\Ao=\{ 1,2,3,4 \}$ defined by the probability transition matrix:
\[
P = \left( \begin{array}{llll}
\frac{1}{2} & \frac{1}{2} & 0 & 0\\
\frac{1}{2} & 0 & \frac{1}{2} & 0 \\
0 & 0 & \frac{1}{2} & \frac{1}{2} \\
\frac{1}{2} & 0 & \frac{1}{2} & 0
\end{array} \right).
\]
Define the factor map $\pi$ as follows: let $\At = \{ a,b,c \}$ and put  $\pi:\Ao \to \At$, by $\pi(1) = \pi(3) = a$, $\pi(2) = b$, $\pi(4) = c$. Then the space $\Ot \subset \At^\Zp$ is a subshift of finite type with forbidden words $\{ bb,cc,bc,cb \}$.

This example is not lumpable as $(1,0,0,0)$ is an initial distribution for which the factor process is not Markov; the transition from state $a$ to state $c$ in the output process has probability $0$ until the first occurrence of the word $ba$. However, direct application of the result in \cite{Kemeny60}, shows that the stationary chain $\{X_n\}$ is weakly lumpable with respect $\pi$, i.e., $\{Y_n=\pi(X_n)\}$ is a Markov process, and one can easily compute the corresponding transition probability matrix $\tilde P$.  The stationary invariant  distribution of $\{X_n\}$ is $ p = \left(\frac{1}{3},\frac{1}{6},\frac{1}{3},\frac{1}{6} \right)$. 
Hence, $\Pt$ is a Markov measure with the probability transition matrix 
\[
\tilde P = \left(\begin{array}{lll}
\frac{1}{2} & \frac{1}{4} & \frac{1}{4} \\
1 & 0 & 0 \\
1 & 0 & 0 \\
\end{array} \right).
\]

We proceed by showing that no continuous measure disintegration exists. In this particular case,
the map $\pi$  is \emph{ finite to one factor map}, meaning the fibres have a bounded number of elements. 

Since $\pi^{-1}(b) = 2$ and $\pi^{-1}(c) = 4$, but $\pi^{-1}(a) = \{ 1,3 \}$. If we assume that $y_n = a$ and $m = \min\{ m > n: y_m \in \{b,c \} \}$  is finite, then $\pi^{-1}(y)_n = 1$  if $y_m = b$,
 and $\pi^{-1}(y)_n = 3$ if $y_m = c$. Therefore elements $y \in \Ot$ are uniquely decodable if $y \neq y_0^{n-1}a_n^\infty$ for any $n \ge 1$, i.e., if $y$ does not end with infinite string of $a$'s.
 Otherwise, if $y=y_0^{n-1}a_n^\infty$ for some $n\ge 1$, then the fibre contains exactly
 two points, corresponding to one of the two possible tails: infinite number of $1$'s or $3$'s. 
 This fibre structure makes a continuous measure disintegration of $\Po$ impossible: 
Suppose $\{ \Po_y \}_\Zp$ is a measure disintegration of $\Po$ for the  factor map $\pi$. Then $\Po_y$ is supported on $\Oo_y$ for each $y \in \Ot$. Furthermore, consider the point $z= a_0^\infty$. Then any open neighborhood of $z$ contains, for some $n>0$ the cylinder sets $[a_0^n b]$ and $[a_0^n c]$. For each $y \in [a_0^n b]$ we have $\Oo_{y} \subset [1_0]$, while for each $y'\in [a_0^n c]$ we have $\Oo_{y'} \subset [3_0]$. Hence
\[
\left|\int_{\Oo_{y}} \mathbbm{1}_{[1_0]}(x) \Po_{y}(dx) - \int_{\Oo_{y'}} \mathbbm{1}_{[1_0]}(x) \Po_{y'}(dx)\right| =|1-0|= 1.
\]
Since both cylinders $[a_0^n b]$ and $[a_0^n c]$ have positive $\Pt$-measure, and $\mathbbm{1}_{[1_0]}$ is a continuous function we conclude that no measure disintegration $\Po$ can be coninuous at $z=a_0^\infty$.

One can also use entropy methods to conclude that the factor measure is well behaved. Suppose a subshift of finite type is defined by a primitive $0/1$-matrix $M$. Then the \emph{ topological entropy }of the SFT $X_M$ is equal to $\log(\lambda)$, where $\lambda$ is the largest eigenvalue of $M$. In our case,
\[
M = 
\left( \begin{array}{llll}
1 & 1 & 0 & 0\\
1 & 0 & 1 & 0 \\
0 & 0 & 1 & 1 \\
1 & 0 & 1 & 0
\end{array} \right)
\]
has characteristic polynomial $\lambda^4-2 \lambda^3 -\lambda^2+2 \lambda$, with the largest eigenvalue equal to $2$. The topological entropy is an upper bound for the entropy of measures on the SFT, $h_\Po \leq h_{top}(\Oo) = \log(2)$. For Markov measures we have
\[
h_\Po = -\sum_{x_0,x_1 \in \Ao} p_{x_0} P_{x_0,x_1} \log(P_{x_0,x_1}),
\]
which in our case is  $h_\Po=\log(2)$. For an irreducible SFT the measure of maximal entropy, also known as Parry measure, is unique and is Markov. Moreover, a finite-to-one factor map between two SFTs sends the measure of maximal entropy to the measure of maximal entropy. Thus since $\Pt$ is a measure of maximal
entropy on $\Oo$, then so is  $\nu=\mu\circ\pi^{-1}$, and hence $\nu$ is also Markov.
\section{Conclusions and final remarks}
In the present paper we have established sufficient conditions (Corollary 3.3) for 1-block factors
of Markov measures to be $g$-measures.  The result combines
naturally two types of sufficient conditions: namely, 
the lumpability and  the existence of a
continuous disintegrations. 
We have presented an example showing that these codntions
are in fact complementary. We have also demonstrated that the known sufficient conditions on
regularity of factors of Markov measures imply existence of a continuous measure disintegrations. Note also that Theorem \ref{thm:decomp} does in fact provide 
the necessary and sufficient conditions for the factor measure $\nu$ to be regular. Namely, the factor measure $\nu$ is a $g$-measure if and only if there exists a continuous normalized function  $g:\Ot\to (0,1)$ such that
$g(y)=\tilde g(y)$ for $\Pt$-a.a. $y\in\Ot$, where $\tilde g(y)$ is given by (\ref{tildeg}). Equivalently, we can conclude that $\Pt$ is regular if and only if 
there exists a  disintegration $\{\Po_y\}$ of $\Po$ such the right hand side in (\ref{tildeg}) defines a continuous function of $y$.
It would be interesting to understand whether the two sets of sufficient
conditions for continuity of $\tilde g$ identified in the present paper, are complete, i.e., exhaust all possibilities.

An important point which we have not addressed is the following: in case the one-block factor of the Markov measure is a $g$-measure,
how `smooth' is the corresponding $g$-function?
In all known examples, factors of Markov measures
are either $g$-measures
 for some H\"older continuous $g$, or not regular (c.f., Furstenberg's example).
It would be interesting to understand whether this apparent dichotomy can be turned into a rigorous result.
We believe that the  proposed method: the study of conditional measure disintegrations, can be used to address such questions as well. For example, using the method of Fan-Pollicott, discussed in Section 4.2, one can show that
in the fibre-mixing case, the family of non-homogeneous equilibrium states $\{\mu^y\}$ is in fact `smooth',
and the resulting $g$-function is H\"older continuous. For details on how to deduce properties of the $g$-function
from the properties of the corresponding measure disintegration the reader can consult \cite{Verbitskiy11}.
We have chosen not to provide the details here because the result is well-known and the estimates on the exponential  rate of decay of variations the $g$-function
are weaker in comparison to the more direct proofs \cite{Yoo10},  see also
\cite{Kempton11,Pi1, Pi2} for more general results. 

An advantage of the proposed method lies in the fact that uniqueness
of the Markov measure on each fibre immediately implies existence of a continuous
measure disintegration, and hence, regularity of the factor measure.
In comparison, some earlier results had to establish uniqueness of fibre measures
first, followed by a separate argument for continuity of family of conditional measures, e.g., \cite{Verbitskiy11}.

{
The approach developed in the present paper can also be applied to the study 
of regularity properties of the renormalized Gibbs random fields \cite{BerghoutThesis,Berghout22}.
The so-called \emph{non-overlapping block renormalization transformations} \cite{Enter93} can be represented 
as 1-block factors, i.e., the renormalized field is given by $Y_{n}=\pi(X_n)$   for all $n\in\Z^d$. Existence of a continuous measure disintegration remains a sufficient
condition for regularity of the factor measure, and is implied by the uniqueness of Gibbs measures on fibres. As a result, one obtains significant simplification of the proofs of regularity of Gibbs factors in several cases, e.g., decimations of the Ising models \cite{Enter93,HK}  and  Fuzzy Potts model \cite{Maes95,Ha}.
}
\section*{acknowledgements}
The authors acknowledge support from The Dutch Research Council (NWO), grant 613.001.218.
 
\begin{bibdiv}
\begin{biblist}
\bib{Baum66}{article}{
  author={Baum, Leonard E.},
  author={Petrie, Ted},
  doi={10.1214/aoms/1177699147},
  journal={Ann. Math. Statist.},
  number={6},
  pages={1554--1563},
  publisher={The Institute of Mathematical Statistics},
  title={Statistical Inference for Probabilistic Functions of Finite State Markov Chains},
  url={https://doi.org/10.1214/aoms/1177699147},
  volume={37},
  year={1966},
}

\bib{BerghoutThesis}{book}{
  author={Berghout, S.},
  title={Gibbs processes and applications},
  publisher={PhD Thesis, Leiden University},
  date={2020},
}

\bib{Berghout22}{article}{
  author={Berghout, S.},
  author={Verbitskiy, S.},
  title={Renormalization of Gibbs States},
  publisher={Preprint},
  date={2020},
}

\bib{Blackwell56}{article}{
  author={Blackwell, D.},
  title={The entropy of functions of finite-state {M}arkov chains},
  booktitle={Transactions of the first {P}rague conference on information theory, {S}tatistical decision functions, random processes held at {L}iblice near {P}rague from {N}ovember 28 to 30, 1956},
  pages={13--20},
  publisher={Publishing House of the Czechoslovak Academy of Sciences, Prague},
  year={1957},
}

\bib{Bogenschutz95}{article}{
  title={Ruelle's transfer operator for random subshifts of finite type},
  volume={15},
  doi={10.1017/S0143385700008464},
  number={3},
  journal={Ergodic Theory and Dynamical Systems},
  publisher={Cambridge University Press},
  author={Bogenschütz, Thomas},
  author={Gundlach, Volker Mathias},
  year={1995},
  pages={413–447},
}

\bib{Bramson93}{article}{
  author={Bramson, Maury},
  author={Kalikow, Steven},
  title={Nonuniqueness in $g$-functions},
  journal={Israel J. Math.},
  volume={84},
  date={1993},
  number={1-2},
  pages={153--160},
  issn={0021-2172},
  review={\MR {1244665}},
  doi={10.1007/BF02761697},
}

\bib{Burke58}{article}{
  author={Burke, C. J.},
  author={Rosenblatt, M.},
  doi={10.1214/aoms/1177706444},
  journal={Ann. Math. Statist.},
  number={4},
  pages={1112--1122},
  publisher={The Institute of Mathematical Statistics},
  title={A Markovian Function of a Markov Chain},
  url={https://doi.org/10.1214/aoms/1177706444},
  volume={29},
  year={1958},
}

\bib{Chazottes03}{article}{
  author={Chazottes, J.-R.},
  author={Ugalde, E.},
  title={Projection of Markov measures may be Gibbsian},
  journal={J. Statist. Phys.},
  volume={111},
  date={2003},
  number={5-6},
  pages={1245--1272},
  issn={0022-4715},
  review={\MR {1975928}},
  doi={10.1023/A:1023056317067},
}

\bib{Denker99}{article}{
  author={Denker, Manfred},
  author={Gordin, Mikhail},
  title={Gibbs measures for fibred systems},
  journal={Adv. Math.},
  volume={148},
  year={1999},
  number={2},
  pages={161--192},
  issn={0001-8708},
  doi={10.1006/aima.1999.1843},
  url={https://doi.org/10.1006/aima.1999.1843},
}

\bib{Dynkin63}{book}{
  author={Dynkin, E. B.},
  title={Markov processes},
  publisher={Gosudarstv. Izdat. Fiz.-Mat. Lit., Moscow},
  year={1963},
  pages={859},
}

\bib{Enter93}{article}{
  author={van Enter, Aernout C. D.},
  author={Fern\'andez, Roberto},
  author={Sokal, Alan D.},
  title={Regularity properties and pathologies of position-space renormalization-group transformations: Scope and limitations of Gibbsian theory},
  journal={Journal of Statistical Physics},
  year={1993},
  volume={72},
  number={5},
  pages={879--1167},
  issn={1572-9613},
  doi={10.1007/BF01048183},
  url={https://doi.org/10.1007/BF01048183},
}

\bib{Fan00}{article}{
  author={Fan, Ai Hua},
  author={Pollicott, Mark},
  title={Non-homogeneous equilibrium states and convergence speeds of averaging operators},
  journal={Math. Proc. Cambridge Philos. Soc.},
  volume={129},
  year={2000},
  number={1},
  pages={99--115},
  issn={0305-0041},
  doi={10.1017/S0305004100004485},
  url={https://doi.org/10.1017/S0305004100004485},
}

\bib{Fernandez04}{article}{
  author={Fern\'andez, R.},
  author={Maillard, G.},
  title={Chains and specifications},
  journal={Markov Process. Related Fields},
  volume={10},
  year={2004},
  number={3},
  pages={435--456},
  issn={1024-2953},
}

\bib{Fernandez05}{article}{
  author={Fern\'andez, Roberto},
  author={Maillard, Gr\'egory},
  title={Chains with complete connections: general theory, uniqueness, loss of memory and mixing properties},
  journal={J. Stat. Phys.},
  volume={118},
  year={2005},
  number={3-4},
  pages={555--588},
  issn={0022-4715},
  url={https://doi.org/10.1007/s10955-004-8821-5},
}

\bib{Furstenberg58}{book}{
  author={Furstenberg, Harry},
  title={Stationary processes and prediction theory},
  series={Annals of Mathematics Studies, No. 44},
  publisher={Princeton University Press, Princeton, N.J.},
  date={1960},
  pages={x+283},
  review={\MR {0140151}},
}

\bib{Georgii74}{book}{
  title={Stochastische Felder und ihre Anwendung auf Interaktionssysteme},
  author={Georgii, Hans-Otto},
  year={1974},
  publisher={Inst. f{\"u}r Angewandte Math.},
}

\bib{Gurvits05}{article}{
  title={Markov property for a function of a Markov chain: A linear algebra approach},
  author={ Gurvits, L.},
  author={Ledoux, J.},
  journal={Linear Algebra and its Applications},
  volume={404},
  pages={85 - 117},
  year={2005},
}

\bib{Ha}{article}{
  author={H\"{a}ggstr\"{o}m, Olle},
  title={Is the fuzzy Potts model Gibbsian?},
  language={English, with English and French summaries},
  journal={Ann. Inst. H. Poincar\'{e} Probab. Statist.},
  volume={39},
  date={2003},
  number={5},
  pages={891--917},
  issn={0246-0203},
  review={\MR {1997217}},
  doi={10.1016/S0246-0203(03)00026-8},
}

\bib{HK}{article}{
  author={Haller, Karl},
  author={Kennedy, Tom},
  title={Absence of renormalization group pathologies near the critical temperature. Two examples},
  journal={J. Statist. Phys.},
  volume={85},
  date={1996},
  number={5-6},
  pages={607--637},
  issn={0022-4715},
  review={\MR {1418807}},
  doi={10.1007/BF02199358},
}

\bib{Maes95}{article}{
  author={Maes, Christian},
  author={Vande Velde, Koen},
  title={The fuzzy Potts model},
  journal={J. Phys. A},
  volume={28},
  date={1995},
  number={15},
  pages={4261--4270},
  issn={0305-4470},
  review={\MR {1351929}},
}

\bib{Harris55}{article}{
  author={Harris, T. E.},
  title={On chains of infinite order},
  journal={Pacific J. Math.},
  volume={5},
  date={1955},
  pages={707--724},
  issn={0030-8730},
  review={\MR {0075482}},
}

\bib{Johansson17}{article}{
  author={Johansson, Anders},
  author={\"{O}berg, Anders},
  author={Pollicott, Mark},
  title={Phase transitions in long-range Ising models and an optimal condition for factors of $g$-measures},
  journal={Ergodic Theory Dynam. Systems},
  volume={39},
  date={2019},
  number={5},
  pages={1317--1330},
  issn={0143-3857},
  review={\MR {3928619}},
  doi={10.1017/etds.2017.66},
}

\bib{Kalikow90}{article}{
  author={Kalikow, Steve},
  title={Random Markov processes and uniform martingales},
  journal={Israel J. Math.},
  volume={71},
  date={1990},
  number={1},
  pages={33--54},
  issn={0021-2172},
  review={\MR {1074503}},
}

\bib{Keane72}{article}{
  author={Keane, Michael},
  title={Strongly mixing $g$-measures},
  journal={Invent. Math.},
  volume={16},
  date={1972},
  pages={309--324},
  issn={0020-9910},
  review={\MR {0310193 (46 \#9295)}},
}

\bib{Kemeny60}{book}{
  author={Kemeny, John G.},
  author={ Snell, J. Laurie},
  title={Finite {M}arkov chains},
  series={The University Series in Undergraduate Mathematics},
  publisher={D. Van Nostrand Co., Inc., Princeton, N.J.-Toronto-London-New York},
  year={1960},
  pages={viii+210},
}

\bib{Kempton11}{article}{
  author={Kempton, Thomas M. W.},
  title={Factors of Gibbs measures for subshifts of finite type},
  journal={Bull. Lond. Math. Soc.},
  volume={43},
  date={2011},
  number={4},
  pages={751--764},
  issn={0024-6093},
  review={\MR {2820161}},
  doi={10.1112/blms/bdr010},
}

\bib{Kitchens85}{article}{
  author={Kitchens, Bruce},
  author={Tuncel, Selim},
  title={Finitary measures for subshifts of finite type and sofic systems},
  journal={Mem. Amer. Math. Soc.},
  volume={58},
  date={1985},
  number={338},
  pages={iv+68},
  issn={0065-9266},
  review={\MR {818917}},
  doi={10.1090/memo/0338},
}

\bib{Ledrappier77}{article}{
  author={Ledrappier, Francois},
  author={Walters, Peter},
  title={A relativised variational principle for continuous transformations},
  journal={J. London Math. Soc. (2)},
  volume={16},
  year={1977},
  number={3},
  pages={568--576},
  issn={0024-6107},
  doi={10.1112/jlms/s2-16.3.568},
  url={https://doi.org/10.1112/jlms/s2-16.3.568},
}

\bib{Lind95}{book}{
  author={Lind, Douglas},
  author={Marcus, Brian},
  title={An introduction to symbolic dynamics and coding},
  publisher={Cambridge University Press, Cambridge},
  date={1995},
  pages={xvi+495},
  isbn={0-521-55124-2},
  isbn={0-521-55900-6},
  review={\MR {1369092}},
  doi={10.1017/CBO9780511626302},
}

\bib{Maes}{article}{
  author={L\H {o}rinczi, J\'{o}zsef},
  author={Maes, Christian},
  author={Vande Velde, Koen},
  title={Transformations of Gibbs measures},
  journal={Probab. Theory Related Fields},
  volume={112},
  date={1998},
  number={1},
  pages={121--147},
  issn={0178-8051},
  review={\MR {1646444}},
  doi={10.1007/s004400050185},
}

\bib{Marcus84}{article}{
  author={Marcus, Brian},
  author={Petersen, Karl},
  author={Williams, Susan},
  title={Transmission rates and factors of Markov chains},
  conference={ title={Conference in modern analysis and probability}, address={New Haven, Conn.}, date={1982}, },
  book={ series={Contemp. Math.}, volume={26}, publisher={Amer. Math. Soc., Providence, RI}, },
  date={1984},
  pages={279--293},
  review={\MR {737408}},
  doi={10.1090/conm/026/737408},
}

\bib{Onicescu35}{article}{
  author={Onicescu, O.},
  author={Mihoc, G.},
  title={Sur les chaines statistiques},
  journal={C. R. Acad. Sci. Paris},
  volume={200},
  pages={511-512},
  year={1935},
}

\bib{Palmer77}{article}{
  author={Palmer, Marion Rachel},
  author={Parry, William},
  author={Walters, Peter},
  title={Large sets of endomorphisms and of $g$-measures},
  conference={ title={The structure of attractors in dynamical systems}, address={Proc. Conf., North Dakota State Univ., Fargo, N.D.}, date={1977}, },
  book={ series={Lecture Notes in Math.}, volume={668}, publisher={Springer, Berlin}, },
  date={1978},
  pages={191--210},
}

\bib{Pi1}{article}{
  author={Piraino, Mark},
  title={Projections of Gibbs states for H\"{o}lder potentials},
  journal={J. Stat. Phys.},
  volume={170},
  date={2018},
  number={5},
  pages={952--961},
  issn={0022-4715},
  review={\MR {3767002}},
  doi={10.1007/s10955-018-1967-3},
}

\bib{Pi2}{article}{
  author={Piraino, Mark},
  title={Single site factors of Gibbs measures},
  journal={Nonlinearity},
  volume={33},
  date={2020},
  number={2},
  pages={742--761},
  issn={0951-7715},
  review={\MR {4048777}},
  doi={10.1088/1361-6544/ab5179},
}

\bib{Ruelle78}{book}{
  author={Ruelle, David},
  title={Thermodynamic formalism},
  series={Encyclopedia of Mathematics and its Applications},
  volume={5},
  note={The mathematical structures of classical equilibrium statistical mechanics; With a foreword by Giovanni Gallavotti and Gian-Carlo Rota},
  publisher={Addison-Wesley Publishing Co., Reading, Mass.},
  date={1978},
  pages={xix+183},
  isbn={0-201-13504-3},
  review={\MR {511655 (80g:82017)}},
}

\bib{Stadje05}{article}{
  title={The evolution of aggregated Markov chains},
  author={Stadje, Wolfgang},
  journal={Statistics \& Probability Letters},
  volume={74},
  number={4},
  pages={303 - 311},
  year={2005},
  issn={0167-7152},
  doi={https://doi.org/10.1016/j.spl.2005.04.052},
}

\bib{Tjur74}{book}{
  author={Tjur, Tue},
  title={Conditional probability distributions},
  note={Lecture Notes, No. 2},
  publisher={Institute of Mathematical Statistics, University of Copenhagen, Copenhagen},
  year={1974},
  pages={iv+370},
}

\bib{Tjur75}{book}{
  title={A Constructive Definition of Conditional Distributions},
  author={Tjur, Tue},
  series={Preprint},
  url={https://books.google.nl/books?id=CeP7XwAACAAJ},
  year={1975},
  publisher={Institute of Mathematical Statistics, University of Copenhagen},
}

\bib{Verbitskiy11}{article}{
  author={Verbitskiy, Evgeny},
  title={On factors of {$g$}-measures},
  journal={Indag. Math. (N.S.)},
  volume={22},
  year={2011},
  number={3-4},
  pages={315--329},
  issn={0019-3577},
  url={https://doi.org/10.1016/j.indag.2011.09.001},
}

\bib{Walters86}{article}{
  author={Walters, Peter},
  title={Relative pressure, relative equilibrium states, compensation functions and many-to-one codes between subshifts},
  journal={Trans. Amer. Math. Soc.},
  volume={296},
  date={1986},
  number={1},
  pages={1--31},
  issn={0002-9947},
  review={\MR {837796}},
  doi={10.2307/2000558},
}

\bib{Yoo10}{article}{
  author={Yoo, Jisang},
  title={On Factor Maps that Send Markov Measures to Gibbs Measures},
  journal={Journal of Statistical Physics},
  year={2010},
  volume={141},
  number={6},
  pages={1055--1070},
  issn={1572-9613},
  doi={10.1007/s10955-010-0084-8},
  url={https://doi.org/10.1007/s10955-010-0084-8},
}

\end{biblist}
\end{bibdiv}
 
\end{document}